\newtheorem{theorem}{Theorem}[section]
\newtheorem{lemma}[theorem]{Lemma}
\newtheorem{corollary}[theorem]{Corollary}
\newtheorem*{Hilbert 19th problem}{Hilbert 19th problem}
\theoremstyle{definition}
\theoremstyle{remark}
\newtheorem{remark}[theorem]{Remark}
\numberwithin{equation}{section}
\newcommand{\ubar}[1]{\underaccent{\bar}{#1}}
\newcommand{\R}{\mathbb{R}}
\newcommand{\N}{\mathbb{N}}
\DeclareMathOperator{\divr}{div}
\DeclareMathOperator{\dist}{dist}
\title[On supercritical divergence-free drifts]
      {On supercritical divergence-free drifts}
\author[Bian Wu]{Bian Wu}
\date{\today}
\begin{document}

\begin{abstract}
For second-order elliptic or parabolic equations with subcritical or critical drifts, it is well-known that the Harnack inequality holds and their bounded weak solutions are H\"older continuous. We construct time-independent supercritical drifts in $L^{n-\lambda}(\R^n)$ with arbitrarily small $\lambda>0$ such that the Harnack inequality and the H\"older continuity fail in both the elliptic and the parabolic cases, thus confirming a conjecture by Seregin, Silvestre, {\v{S}}ver{\'a}k and Zlato{\v{s}} in \cite{seregin2012divergence}. These results are sharp, and they also apply to a toy model of the axi-symmetric Navier-Stokes equations in space dimension $3$.
\end{abstract}

\maketitle

\section{Introduction}

\subsection{De Giorgi-Nash-Moser iteration} In his address at the 1900 International Congress of Mathematicians, Hilbert stated the famous 19th problem: Are the solutions of regular variational problems always necessarily analytic? On the one hand, Bernstein in 1904, and later I. Petrowski, E. Hopf, J. Schauder, C. B. Morrey, and L. Nirenberg showed sufficiently regular solutions of the Euler–Lagrange equations are real analytic. On the other hand, one could apply the variational methods pioneered by Riemann and Hilbert to produce weak solutions of the Euler–Lagrange equations in Sobolev spaces, thus reducing Hilbert 19th problem to showing that weak solutions $v \in H^1$ of the second-order elliptic PDEs
\begin{equation} \label{div_form}
  -\partial_i(a_{ij} \partial_j v) = 0,
\end{equation}
with uniformly elliptic bounded coefficients $\{ a_{ij} \}$ are sufficiently regular. For example, proving the  H\"older continuity of $v$ for some strictly positive H\"older exponent is enough to close the gap. This was done independently by E. De Giorgi \cite{de1957sulla} and J. Nash \cite{nash1958continuity}. De Giorgi’s argument was later simplified by Moser \cite{moser1960new}. The key H\"older estimates have also been generalized to parabolic equations of non-divergence form by Krylov and Safanov \cite{krylov1980certain}, Aronson \cite{aronson1968non} and other mathematicians, and thus have become a standard tool called De Giorgi-Nash-Moser iteration. This breakthrough as well as relevant generalizations allow to prove strong a priori estimates for very general systems. This, however, only partially answers Nash's high expectations on his celebrated regularity result:

\begin{displayquote}
	Little is known about the existence, uniqueness and smoothness of solutions of the general equations of flow for a viscous, compressible and heat conducting fluid. These are a non-linear parabolic system of equations. Also the relationship between this continuum description of a fluid (gas) and the more physically valid statistical mechanical description is not well understood. An interest in these questions led us to undertake this work. It became clear that nothing could be done about the continuum description of general fluid flow without the ability to handle non-linear parabolic equations and that this in turn required an a priori estimate of continuity such as H\"older continuity in general second-order parabolic PDEs. Probably one should first try to prove a conditional existence and uniqueness theorem for the flow equations. This should give existence, smoothness, and unique continuation (in time) of flows, conditional on the non-appearance of certain gross types of singularity, such as infinities of temperature or density. (A gross singularity could arise, for example, from a converging spherical shock wave.) A result of this kind would clarify the turbulence problem.\\
	\hspace*{\fill} --- Page 932, Nash \cite{nash1958continuity}.
\end{displayquote}

\subsection{The parabolic equation of non-divergence form}

If one wants to apply De Giorgi-Nash-Moser iteration in fluid dynamics as Nash noted, one needs to handle the general parabolic analogues of \eqref{div_form} in non-divergence form. As a basic model problem, consider the following Cauchy problem with solenoidal drift $u:\R^n \times [0,\infty) \rightarrow \R^n$
\begin{equation} \label{toy1}
  \begin{cases}
  \partial_t v - \Delta v + (u \cdot \nabla) v = 0, \quad \text{in } \R^n \times [0,\infty) \\
  v(x,0) = v_0(x)
  \end{cases},
\end{equation}
where $v:\R^n \times [0,\infty) \rightarrow \R$ is a scalar-valued function, and its elliptic counterpart. Indeed, this toy model may be derived from linearizations of various equations in fluid dynamics. By adding more constraints between $u$ and $v$, we can recover nonlinear models such as the Navier-Stokes equations, the surface quasi-geostrophic equations and others. The key point that allows to apply De Giorgi-Nash-Moser iteration in these nonlinear problems is to assume the ``non-appearance of certain gross types of singularity'' as in Nash's remark. This is equivalent to assuming certain regularity properties of $u$. However, the rigorous justification of the appearance or the non-appearance of certain singularities in fluids is one of the most challenging problem in analysis. One main goal of this paper is to examine how much regularity on $u$ is needed for De Giorgi-Nash-Moser iteration and what implications this has in fluid equations. \par

To properly classify the regularity requirement on $u$, we use the following natural scaling invariance of \eqref{toy1}: for any $\lambda>0$, if $v$ is a solution of \eqref{toy1} with drift $u$, then $v_\lambda(x,t) := \lambda v(\lambda x, \lambda^2 t)$ is a solution of \eqref{toy1} with drift $u_\lambda(x,t) := \lambda u(\lambda x, \lambda^2 t)$. This scaling invariance also leads to a natural classification of function spaces with respect to this equation: for a function space $B$, we say $B$ is subcritical if $\|u_\lambda\|_B \rightarrow 0$ as $\lambda \rightarrow 0$. $B$ is called critical, if $\|u_\lambda\|_B = \|u\|_B$ for any $\lambda$. $B$ is called supercritical, if $\|u_\lambda\|_B \rightarrow \infty$ as $\lambda \rightarrow 0$. \par

As pointed out by Seregin, Silvestre, {\v{S}}ver{\'a}k and Zlato{\v{s}} in \cite{seregin2012divergence}, if the drift $u$ is divergence-free, one can use Nash's idea in \cite{nash1958continuity} to show the following upper Gaussian bound for the fundamental solution $G$ of \eqref{toy1},
\begin{equation} \label{nash}
  G(x,t;y,s) \leq C(t-s)^{-n/2}.
\end{equation}
Moreover, one can use this Gaussian bound to show that any weak solution $v$ in $L^\infty_tL^2_x \bigcap L^2_tH^1_x$ is locally bounded, but not H\"older continuous. Note that this only requires $u$ to be weakly divergence-free and locally integrable. Here, weak solutions of \eqref{toy1} are distributional solutions in $L^\infty_tL^2_x \bigcap L^2_tH^1_x$. As we shall see later, closing this gap from local boundedness to H\"older continuity would clarify the regularity problem of the axi-symmetric Navier Stokes equations in space dimension 3.

If we further assume the drifts to be subcritical, for example, $u \in L^p(\R^n)$ with $p>n$, Aronson \cite{aronson1968non} in 1968 used De Giorgi-Nash-Moser iteration and showed that weak solutions of the parabolic equation \eqref{toy1} have many nice properties similar to parabolic equations in divergence form, including upper and lower Gaussian bounds for the fundamental solutions, Harnack's inequality and H\"older continuity of weak solutions. Before Aronson's work \cite{aronson1968non}, these properties were proved for the parabolic equation of divergence form \eqref{div_form} by Moser in \cite{moser1964harnack}, \cite{moser1967correetion} and \cite{moser1971pointwise}, for \eqref{toy1} with bounded drifts by Krylov and Safanov \cite{krylov1980certain}, or for \eqref{toy1} with H\"older continuous drifts by Aronson in \cite{aronson1967bounds}, etc. \par

\begin{theorem}[Aronson, \cite{aronson1968non}; Fabes and Stroock \cite{fabes1989new}] \label{thm_aronson}

Let $n \geq 2$. Given that $u \in L^p(\R^n)$ with $p \geq n$, the parabolic equation \eqref{toy1} have the following properties:

\begin{enumerate}[leftmargin=*,label=\textup{(\arabic*)},align=left]
  \item \textup{[P1] \textbf{Gaussian bounds} (cf. \cite{aronson1968non}, page 613)}. There exist $c_1,c_2,C>0$ depending on $u$, such that the fundamental solution $G$ of \eqref{toy1} satisfies
    \begin{equation} \label{Gaussian_UL}
      C^{-1} g_1(x-y,t-s) \leq G(x,t;y,s) \leq Cg_2(x-y,t-s)
    \end{equation}
  for any $x,y \in \R^n$, $t,s \geq 0$ with $t>s$, where $g_i$ is the fundamental solution of the heat equation $\partial_t v - c_i \Delta v = 0$ for $i = 1,2$.

  \item \textup{[P2] \textbf{Harnack inequality} (cf. \cite{aronson1968non}, Theorem H)}. Let $v$ be a non-negative weak solution of the equation \eqref{toy1} in bounded space-time domain $Q \subset \R^n \times [0,\infty)$. Suppose $Q' \subset Q$ is convex and $d:=\dist(Q', \partial Q) > 0$, then we have
  \begin{equation} \label{harnack} \sup_Q v \leq C \inf_Q v, \end{equation}
  where $C>0$ depends on $d$ and $\|u\|_{L^p(\R^n)}$.

  \item \textup{[P3] \textbf{H\"older continuity} (cf. \cite{aronson1968non}, Theorem C)}. Let $v$ be a bounded weak solution of the equation \eqref{toy1} in bounded space-time domain $\Omega \times [t_0,t_1] \subset \R^n \times [0,\infty)$ with $t_1>t_0$, then $v \in C^\alpha(\Omega'\times[t_0+t',t_1])$ for any $\Omega'$ with $\dist(\Omega, \Omega')>0$, any $t' \in (0,t_1-t_0)$ and some $\alpha > 0$. The $C^{\alpha}$-norm and the H\"older exponent depend on $\Omega$, $\Omega'$, $\frac{t'}{t_1-t_0}$, $\|v\|_{L^2_{x,t}(\Omega \times [t_0,t_1])}$ and $\|u\|_{L^p(\R^n)}$.\footnote{The dependence of the constant $C>0$ is important, but this is not explicitly mentioned in \cite{aronson1968non}. For detailed estimates one can check the proofs of Theorem 2 and Theorem 3 in \cite{aronson1967local}.}
\end{enumerate}

\end{theorem}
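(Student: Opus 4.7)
The plan is to follow the De Giorgi--Nash--Moser iteration adapted to the drift equation, using the divergence-free property of $u$ to turn the first-order drift term into a lower-order perturbation of the heat operator. The starting point is a Caccioppoli-type estimate: testing \eqref{toy1} against $\varphi^2 v$ for a smooth space--time cutoff $\varphi$ and integrating by parts yields
\[
\int |\nabla v|^2 \varphi^2 \lesssim \int v^2 \bigl(|\nabla \varphi|^2 + |\partial_t \varphi^2|\bigr) + \int v^2\, u \cdot \nabla \varphi^2,
\]
where the drift term is symmetrized as $\int \varphi^2 v\, (u \cdot \nabla v) = \tfrac{1}{2}\int u \cdot \nabla(v^2)\, \varphi^2 = -\tfrac{1}{2}\int v^2\, u \cdot \nabla \varphi^2$ using $\divr u = 0$. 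H\"older's inequality against $\|u\|_{L^p}$ combined with the Sobolev embedding applied to $v\varphi$ then absorbs the drift contribution into the Dirichlet energy.

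The next step is to upgrade the energy inequality to local boundedness via Moser's iteration: for $q \geq 1$, powers $v^q$ satisfy analogous inequalities, and iterating on a nested family of parabolic cylinders combined with Sobolev embedding yields $\sup v \lesssim \|v\|_{L^2}$ on smaller cylinders, with constants controlled by $\|u\|_{L^p}$. The upper Gaussian bound in [P1] follows Nash's smoothing argument: one first derives the $L^1$--$L^\infty$ decay $\|G(\cdot,t;y,s)\|_{L^\infty} \lesssim (t-s)^{-n/2}$ from Nash's inequality $\|f\|_2^{2+4/n} \lesssim \|\nabla f\|_2^2 \|f\|_1^{4/n}$ applied to the $L^1$-conserving semigroup generated by the equation, and upgrades it to exponential Gaussian tails via a weighted estimate of the form $\int G(x,t;y,s)\, e^{\beta \cdot (x-y)}\, dx \lesssim e^{c|\beta|^2 (t-s)}$.

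For [P2], Moser's logarithmic test function $\log v$ yields a BMO-type estimate on $\log v$ that, combined with the local boundedness from above and below, produces the Harnack inequality; alternatively, the Fabes--Stroock route obtains the lower Gaussian bound via Nash's second logarithmic moment argument and then deduces Harnack from two-sided Gaussian bounds by chaining on overlapping space--time cylinders. Finally, [P3] follows from Harnack by the classical oscillation-decay argument: applying the Harnack inequality to $M - v$ and $v - m$ on shrinking cylinders produces geometric decay of the oscillation, hence H\"older continuity with the indicated dependence on $\Omega$, $\Omega'$, $t'/(t_1-t_0)$, $\|v\|_{L^2_{x,t}}$ and $\|u\|_{L^p}$.

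The principal obstacle is the critical endpoint $p = n$, where $\|u_\lambda\|_{L^n}$ is scale-invariant and shrinking the domain provides no small-parameter gain. The remedy is absolute continuity of the Lebesgue integral: $\|u\|_{L^n(B_r)} \to 0$ as $r \to 0$, so on sufficiently small parabolic cylinders the drift term is dominated by an arbitrarily small multiple of the Dirichlet energy, keeping the Moser iteration constants bounded. The global dependence of all constants on $\|u\|_{L^p(\R^n)}$ rather than on local norms is then a consequence of translation invariance of the equation and a standard covering argument.
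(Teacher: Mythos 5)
This statement is a quoted background result: the paper gives no proof of it, but attributes the subcritical case $p>n$ to Aronson \cite{aronson1968non} and points out in the introduction that the critical case $p=n$ follows by adapting the De Giorgi--Nash--Moser scheme of Fabes--Stroock \cite{fabes1989new}. Your sketch follows exactly that standard route (Caccioppoli with the divergence-free symmetrization, Moser iteration, Nash's argument for the upper Gaussian bound, log-estimate/Fabes--Stroock chaining for Harnack, oscillation decay for [P3]), so at the level of strategy it matches the cited sources.

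The genuine problem is your treatment of the endpoint $p=n$, which is the only non-classical case and the one \Cref{thm_parabolic}'s proof actually leans on. First, the obstacle is misdiagnosed: after the symmetrization $\int \varphi^2 v\,(u\cdot\nabla v)=-\tfrac12\int v^2\,u\cdot\nabla\varphi^2$, the drift term contains only \emph{one} factor of $\nabla(v\varphi)$, since for $n\ge3$
\[
\Big|\int v^2\,u\cdot\varphi\nabla\varphi\Big|
\;\le\; \|u\|_{L^n}\,\|v\varphi\|_{L^{2n/(n-2)}}\,\|v\nabla\varphi\|_{L^2}
\;\le\; \epsilon\,\|\nabla(v\varphi)\|_{L^2}^2 + C\epsilon^{-1}\|u\|_{L^n}^2\,\|v\nabla\varphi\|_{L^2}^2
\]
(for $n=2$ use the parabolic embedding $L^\infty_tL^2_x\cap L^2_tH^1_x\hookrightarrow L^4_{t,x}$ instead), so the Dirichlet energy is absorbed by Cauchy--Schwarz with no smallness of $\|u\|_{L^n}$ at all, uniformly over the powers $v^q$ in the iteration; the same antisymmetric structure handles the logarithmic estimate. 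This is precisely how the critical case is run in \cite{fabes1989new}, \cite{seregin2012divergence} with constants depending only on the norm. Second, and more seriously, the remedy you do propose --- equi-integrability, $\|u\|_{L^n(B_r)}\to0$ as $r\to0$ --- yields constants depending on the modulus of absolute continuity of $|u|^n$, i.e.\ on $u$ itself, not only on $\|u\|_{L^n(\R^n)}$ as asserted in [P2]--[P3]; no covering or translation-invariance argument can repair this, because an $L^n$-bounded family need not be $L^n$-equi-integrable (the critical rescalings $u_\lambda(x)=\lambda u(\lambda x)$ preserve the norm while concentrating at a point). This dependence is not cosmetic: the proof of \Cref{thm_parabolic} uses a Harnack constant that is uniform over all drifts with a common bound on $\|u\|_{L^n(\Omega)}$, which your version of the endpoint case would not supply. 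For $p>n$ your argument is fine, since there the subcritical scaling gain gives smallness with norm-only dependence. (A minor further point: in the parabolic Harnack inequality the supremum and infimum must be taken over time-shifted subcylinders; your Moser/Fabes--Stroock route does produce that form, but state it that way.)
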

\par

Osada \cite{osada1987diffusion} proved that these properties also hold for critical drifts $u \in L^\infty_tL^{\infty,-1}_x$, where $L^{\infty,-1}_x$ denotes the distributions which are the first-order derivatives of bounded measuable functions. Fabes and Stroock \cite{fabes1989new} gave another proof of these properties for the equation of divergence form \eqref{div_form} using De Giorgi-Nash-Moser iteration. We remark that one can follow this approach to prove that [P1], [P2] and [P3] also hold for equation \eqref{toy1} with critical drifts $u \in L^\infty_tL^n_x$. Later, Zhang \cite{zhang2004strong} proved these properties if $u$ is pointwise bounded by $C|x|^{-1}$. Friedlander, Vicol \cite{friedlander2011global} and Seregin, Silvestre, {\v{S}}ver{\'a}k, Zlato{\v{s}} \cite{seregin2012divergence} generalized these properties to drifts in the critical space $L^\infty_t \text{BMO}^{-1}_x$.

An important question that one may ask is whether the Laplace operator or the drift $u$ is dominant in terms of the properties [P1], [P2] and [P3], in particular for more general drifts such as supercritical ones. For most subcritical or critical drifts, the results mentioned above tell us the Laplace operator dominates. For supercritical drifts, very little is known. Indeed, the presence of possibly supercritical drifts is exactly the difficulty which prevents us from confirming or excluding singularities in many important fluid models using standard techniques such as De Giorgi-Nash-Moser iteration. The first contribution of this paper is to construct general supercritical drifts such that [P1], [P2] and [P3] fail, namely to prove the following theorem.

\begin{theorem} \label{thm_parabolic}
Let $n \geq 3$. For any $\lambda \in (0,n-1)$, there exists an time-independent divergence-free drift $u \in L^{n-\lambda}(B)$ with the following property, where $T \in (0,\infty)$ and $B$ is the unit ball of $\,\R^n$. The parabolic equation \eqref{toy1} in $B \times [0,T]$ has a bounded weak solution which is not continuous at the origin at the finite time $T>0$.
\end{theorem}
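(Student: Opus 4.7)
The plan is to build $u$ as a dyadic superposition of small, compactly supported, divergence-free building blocks living on shrinking shells around the origin, and then to exhibit a bounded weak solution that retains order-one oscillation on every shell up to a fixed finite time $T$. Fix $r_k := 2^{-k}$ and annuli $A_k := \{r_{k+1} < |x| < r_k\}$, and choose an exponent $\alpha$ with $1 < \alpha < n/(n-\lambda)$, a range nonempty for every $\lambda \in (0,n-1)$ since $n \geq 3$. On each $A_k$ I place a smooth, compactly supported, divergence-free field $u_k$ with $\supp u_k \Subset A_k$ and $\|u_k\|_{L^\infty} \sim V_k := r_k^{-\alpha}$. Setting $u := \sum_k u_k$ yields a time-independent, distributionally divergence-free vector field (the supports are pairwise disjoint), and
\[
\|u\|_{L^{n-\lambda}(B)}^{n-\lambda} \lesssim \sum_k V_k^{n-\lambda}\,|A_k| \lesssim \sum_k r_k^{\,n-\alpha(n-\lambda)} < \infty
\]
by the upper bound $\alpha < n/(n-\lambda)$.

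Each $u_k$ is designed so that its flow possesses a bounded invariant region $C_k \subset A_k$ of volume comparable to $r_k^n$. In dimension three I would use an axisymmetric ansatz in cylindrical coordinates, writing $u_k = (1/s)\nabla^\perp \psi_k$ with $\psi_k(s,z) = V_k r_k\,\Psi((s-s_k,z-z_k)/r_k)$ for a fixed profile $\Psi$ whose nested closed streamlines surround a critical point, so that a level set $\{\psi_k = c\}$ bounds $C_k$; in higher dimensions a product construction (planar rotation in the first two coordinates times a bump cutoff in the remaining ones) serves the same purpose. The supercriticality $\alpha > 1$ forces the advective timescale $r_k/V_k = r_k^{1+\alpha}$ inside $C_k$ to be much smaller than the diffusive crossing time $r_k^2$ as $k \to \infty$.

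I then construct the bounded weak solution of \eqref{toy1} as a weak-$*$ limit of classical solutions $v^{(N)}$ corresponding to the truncated drifts $u^{(N)} := \sum_{k \leq N} u_k \in L^\infty(B)$, whose existence and uniform boundedness are provided by Theorem \ref{thm_aronson}. The initial datum $v_0$ is chosen so that $v_0 \equiv 1$ on each even cell $C_{2k}$ and $v_0 \equiv 0$ on each odd cell $C_{2k+1}$, smoothly extended elsewhere; the maximum principle forces $0 \leq v^{(N)} \leq 1$. Standard compactness together with the Nash upper bound \eqref{nash}, which only requires local integrability of $u$, identifies a limit $v$ that is a bounded weak solution of \eqref{toy1} with drift $u$.

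The main obstacle, and the technical heart of the argument, is the quantitative trapping estimate: at some fixed time $T > 0$ and for some $\delta < 1/2$ independent of $k$, one must exhibit points $x_k^+, x_k^- \in A_k$ with $v(x_k^+,T) \geq 1-\delta$ and $v(x_k^-,T) \leq \delta$. Because $\psi_k$ is transported by $u_k$, barriers of the form $\chi(\psi_k)$ are exact sub-/super-solutions for the transport part, and the leakage forced by the Laplacian is driven by a thin boundary layer around $\partial C_k$ of width $\sim (V_k r_k)^{-1/2} = r_k^{(\alpha-1)/2}$. Making this heuristic rigorous and, crucially, \emph{uniform in} $k$—so that the discontinuity persists along the full sequence $r_k \to 0$ at a single time $T$—is where the supercritical exponent $\alpha > 1$ is genuinely used, and is the step I expect to require the most care.
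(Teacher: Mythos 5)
Your drift construction and the $L^{n-\lambda}$ bookkeeping are fine, but the step you yourself flag as the heart of the argument --- the trapping estimate uniform in $k$ --- is not just delicate, it fails, and with it the whole mechanism. A stationary recirculating eddy $u_k=\frac{1}{s}\nabla^\perp\psi_k$ cannot confine the solution for times much longer than the diffusive time across the cell. The drift is tangent to the level sets of $\psi_k$, so along the diffusion $dX_t=-u(X_t)\,dt+\sqrt2\,dB_t$ one has $d\psi_k(X_t)=\Delta\psi_k(X_t)\,dt+\nabla\psi_k(X_t)\cdot\sqrt2\,dB_t$: the motion \emph{across} streamlines is purely diffusive, no matter how large $V_k$ is. Since $|\nabla\psi_k|\sim V_k$ and the total range of $\psi_k$ over $C_k$ is $\sim V_k r_k$, the exit time from $C_k$ is $O(r_k^2)$, independent of the amplitude $r_k^{-\alpha}$; fast turnover only homogenizes along streamlines and (as in cellular-flow homogenization) \emph{enhances} transverse relaxation rather than blocking it. Consequently, at any fixed $T>0$, every cell with $r_k^2\ll T$ has completely lost the memory of the checkerboard initial datum: after exiting $A_k$ the process spreads over scale $\sqrt T$, and the chance of sitting in the union of the tiny even cells is $o(1)$ by the Nash bound \eqref{nash}. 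So no points $x_k^{\pm}$ with $v(x_k^+,T)\ge 1-\delta$, $v(x_k^-,T)\le\delta$ uniformly in $k$ can exist, and the limit $v(\cdot,T)$ is in fact expected to be continuous at the origin. (A secondary issue: even if such bounds held for the truncated solutions $v^{(N)}$, weak-$*$ convergence alone does not transfer pointwise values to the limit; the paper needs uniform Harnack/H\"older estimates away from the origin plus a diagonal argument for exactly this purpose.)

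The essential idea your proposal is missing, and which the paper uses, is that order-one values at scale $r$ near the origin at time $T$ must be \emph{freshly transported there from unit scale}, not stored locally against diffusion. The paper's drift (\Cref{velocity}) is a single focusing flow of size $\sim|x|^{-1-\alpha}$ in a cone $\{|z|\le r/2\}$, directed toward the origin, which drags a bump of width $\sim h(t)=\bigl(1-(2+\alpha)t\bigr)^{1/(2+\alpha)}$ down to $x=0$ in finite time $T=\frac{1}{2+\alpha}$. The transported bump $g$ is an exact subsolution of the transport part (\Cref{subsolution}), and the supercritical collapse rate makes the accumulated diffusive damage finite: $-\Delta g\le c_0 h(t)^{-2}g$ (\Cref{diffusion_control}) with $\int_0^{T}h(s)^{-2}\,ds<\infty$ precisely because $\frac{2}{2+\alpha}<1$. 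Your boundary-layer heuristic would have to be replaced by this kind of moving-barrier estimate, which inherently requires an inward drift connecting unit scale to the origin; with drifts supported in disjoint annuli no such barrier can be sustained past time $O(r_k^2)$.
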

\par
\begin{remark}
This tells us that the regularity assumption on $u$ in \Cref{thm_aronson} is sharp. In \Cref{subsec_NS}, we shall discuss its implications in fluid dynamics. Our result \Cref{thm_parabolic} is also sharp in dimension since it is false when $n=2$. The details are given in \Cref{lowerdim}.
\end{remark}
\begin{remark}
Results analogues to \Cref{thm_parabolic} hold for many more general supercritical spaces. One can see this in \Cref{thm_NS} or by slightly modifying our construction of $u$.
\end{remark}
\begin{remark}
Since the loss of continuity is a local property, it is not necessary to specify initial and boundary conditions. As we shall see, the loss of continuity can be established for various smooth initial conditions and boundary data.
\end{remark}

Following the approach in \cite{fabes1989new}, one can prove [P1] $\rightarrow$ [P2] $\rightarrow$ [P3]. Therefore, a natural corollary of \Cref{thm_parabolic} is that the Gaussian bounds \eqref{Gaussian_UL} and the Harnack inequality [P2] fail for the supercritical drifts that we construct. Consequently, \Cref{thm_parabolic} means that supercritical drifts can significantly change the fundamental structure of the semigroup generated by the parabolic equation \eqref{toy1}, including the behavior of its fundamental solution.

\subsection{The elliptic equation of non-divergence form}

The elliptic counterpart of \eqref{toy1} is given by
\begin{equation} \label{toy1_elliptic}
  - \Delta v + (u \cdot \nabla) v = 0.
\end{equation}
For any $\lambda>0$, if $v$ is a solution of \eqref{toy1_elliptic} with drift $u$, $v_\lambda(x) := \lambda v(\lambda x)$ is also a solution of \eqref{toy1_elliptic} with drift $u_\lambda(x) := \lambda u(\lambda x)$. In dimensions $n \geq 3$, by integrating the estimate \eqref{nash} in time, we can show an a-priori estimate on $\|v\|_{L^\infty}$ for weak solutions $v \in H^1$ of \eqref{toy1_elliptic}. \par

De Giorgi's work \cite{de1957sulla} for \eqref{div_form} was generalized to \eqref{toy1_elliptic} by Stampacchia \cite{stampacchia1965probleme} in 1965 in the case of subcritical or critical drifts $u$.
\begin{theorem}[Stampacchia, \cite{stampacchia1965probleme}] \label{stampacchia}
Let $n \geq 2$. For any $\lambda \geq 0$, any bounded domain $\Omega \subset \R^n$ and $u \in L^{n+\lambda}(\Omega)$, any weak solution of the elliptic equation \eqref{toy1_elliptic} in $\Omega$ is of $\,C^\alpha(\Omega')$ for any $\Omega' \subset \Omega$ and some $\alpha(\Omega,\Omega',u) > 0$. Moreover, Harnack's inequality holds for non-negative weak solutions.
\end{theorem}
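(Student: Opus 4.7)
My plan is to adapt the De Giorgi--Nash--Moser iteration to the equation with drift, treating the drift term perturbatively via Sobolev embedding combined with the (sub)critical integrability of $u$. The guiding principle is that since $L^{n+\lambda}$ is subcritical (or critical at $\lambda=0$), the $L^{n+\lambda}$-mass of $u$ concentrated on small sets can be made small enough to be absorbed by the Laplacian's energy.

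The first step is a local $L^\infty$ bound for weak (sub)solutions. Testing the equation against $(v-k)_+\eta^2$, where $\eta$ is a smooth cutoff between concentric balls $B_r\subset B_R\subset\Omega$, and rearranging, gives
\begin{equation}
\int |\nabla (v-k)_+|^2 \eta^2 \;\le\; C\int |\nabla \eta|^2 (v-k)_+^2 \;+\; \int |u|\,|\nabla (v-k)_+|\,(v-k)_+\,\eta^2.
\end{equation}
The drift term is estimated by H\"older with exponents $(n+\lambda,\,2,\,q)$ and Sobolev embedding, where $q$ sits just below $2^*=2n/(n-2)$. On the level set $A_{k,R}=\{v>k\}\cap B_R$ this produces a prefactor $|A_{k,R}|^{\theta}$ with $\theta=\tfrac{1}{n}-\tfrac{1}{n+\lambda}>0$ (or, when $\lambda=0$, a smallness obtained from the absolute continuity of $|u|^n$ on sets of small measure). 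Once $|A_{k,R}|$ is small, one summand is absorbed on the left and one derives a Caccioppoli-type inequality. Standard Moser iteration then gives $\|v\|_{L^\infty(B_r)}\le C\|v\|_{L^2(B_R)}$, with $C$ depending on $r,R,\|u\|_{L^{n+\lambda}}$.

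The second step is oscillation decay. For a bounded solution with $\mathrm{osc}_{B_R}v=\omega$, I would apply De Giorgi's measure-propagation lemma to the subsolution $(v-M/2)_+$ (after affine normalization). If the super-level set $\{v>M/2\}\cap B_{R/2}$ occupies a small fraction of $B_{R/2}$, the $L^\infty$ bound above (with carefully chosen $k\nearrow M$) forces $\sup_{B_{R/4}}v<M-\gamma\omega$ for some $\gamma>0$; by applying the same reasoning to $-v$, one of the two cases must occur, so $\mathrm{osc}_{B_{R/4}}v\le\theta\omega$ with $\theta<1$. Iterating on dyadic balls gives $v\in C^\alpha$ with $\alpha=-\log_4\theta>0$. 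For Harnack, I would follow Moser: derive reverse-H\"older inequalities for $v^p$ at both positive and negative $p$, iterate to obtain $\sup_{B_{1/2}}v\le C\|v\|_{L^{p_0}(B_1)}$ and $\inf_{B_{1/2}}v\ge c\|v^{-1}\|_{L^{p_0}(B_1)}^{-1}$, and bridge the two via John--Nirenberg applied to $\log v$, whose equation $-\Delta\log v=|\nabla\log v|^2+u\cdot\nabla\log v$ yields the required BMO estimate once the drift term is again absorbed by the Sobolev/H\"older trick.

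The main obstacle throughout is the drift term. Everything hinges on the fact that subcriticality ($\lambda>0$) or the absolute continuity of $|u|^n$ (critical case, $\lambda=0$) supplies a small prefactor on the right-hand sides of the Caccioppoli inequalities, enabling absorption. This is also the reason the H\"older exponent $\alpha$ is allowed to depend on $u$ (and not merely on $\|u\|_{L^{n+\lambda}}$) in the critical case, where only the modulus of absolute continuity matters. In dimension $n=2$ the Sobolev exponent $2^*$ is infinite, so a minor modification using $H^1\hookrightarrow L^q$ for large finite $q$ (or Trudinger-type inequalities) replaces the above, but the overall scheme is unchanged.
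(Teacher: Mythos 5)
The paper does not prove this theorem at all: it is quoted as a classical result of Stampacchia (with related results of Ladyzhenskaya--Uraltseva, Morrey, Moser, Serrin), so there is no in-paper proof to compare against; what can be judged is whether your sketch matches the standard argument, and it essentially does. The Caccioppoli inequality with the drift handled by H\"older with exponents $(n+\lambda,2,q)$, $q=\tfrac{2(n+\lambda)}{n+\lambda-2}\le 2^{*}$, the prefactor $|A_{k,R}|^{\theta}$ with $\theta=\tfrac1n-\tfrac1{n+\lambda}$ for $\lambda>0$, the absolute-continuity substitute at $\lambda=0$ (which is exactly why $\alpha$ may depend on $u$ and not only on $\|u\|_{L^{n}}$), the De Giorgi oscillation decay, and the Moser/John--Nirenberg route to Harnack are all the standard ingredients, and no divergence-free assumption is needed since the drift term is only estimated in absolute value. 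Two minor points to tighten: the absorption step is not automatic, since $C\|u\|_{L^{n+\lambda}(B_R)}|A_{k,R}|^{\theta}$ need not be small a priori, so one must either take the truncation levels high, or first restrict to balls small enough that $\|u\|_{L^{n+\lambda}(B_R)}$ (resp.\ the local $L^{n}$ mass in the critical case) is small --- you mention both mechanisms but should commit to one in each step; and the identity for $w=\log v$ should read $-\Delta w=|\nabla w|^{2}-u\cdot\nabla w$ (your sign on the drift term is off, though harmless because only $|u\cdot\nabla w|$ enters the BMO estimate). With these caveats, your outline is a correct reconstruction of the classical proof.
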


To be consistent with the parabolic case, we denote Harnack's inequality and H\"older continuity in the elliptic case by [E2] and [E3]. Independent of Stampacchia's work, similar problems have been studied by Ladyzhenskaya and Uraltseva \cite{ladyzhenskaya1964holder}, \cite{ladyzhenskaya1973linear}, Morrey \cite{morrey1959second}, \cite{morrey2009multiple}, Moser \cite{moser1960new}, \cite{moser1961harnack} and Serrin \cite{serrin1964local}. Later, there have been many efforts trying to prove [E2] and [E3] for more general drifts, such as \cite{cranston1987conditional}, \cite{osada1987diffusion} and \cite{nazarov2009qualitative}. As far as we know, [E2] and [E3] have been only been proved in general in the case of subcritical or critical drifts. More recently, Seregin, Silvestre, {\v{S}}ver{\'a}k and Zlato{\v{s}} \cite{seregin2012divergence} proved, among other things, that $u \in \text{BMO}^{-1}(\R^n)$ suffices to give [E2] and [E3]. They also conjectured on page 510 of \cite{seregin2012divergence} that the divergence-free condition on $u$ is not sufficient to get a $\,C^\alpha$-bound on the weak solutions of \eqref{toy1_elliptic} under the assumptions $\|v\|_{L^\infty}<C$ and $\|u\|_{L^{n-\lambda}}<C$ with $\lambda>0$.

The second contribution of this paper is to prove that the choice of $n+\lambda$ in \Cref{stampacchia} is sharp and thus to confirm the conjecture of Seregin, Silvestre, {\v{S}}ver{\'a}k and Zlato{\v{s}} \cite{seregin2012divergence}. We prove

\begin{theorem} \label{thm_elliptic}
Let $n \geq 3$. For any $\lambda \in (0,n-1)$, there exists divergence-free $u \in L^{n-\lambda}(\R^n)$ satisfying $\divr u = 0$, such that there is a bounded weak solution $v$ of the elliptic equation \eqref{toy1_elliptic} in the unit ball of $\,\R^n$ which is not continuous at the origin.
\end{theorem}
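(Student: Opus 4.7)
The plan is to construct an explicit pair $(u,v)$: a divergence-free $u \in L^{n-\lambda}(B)$ and a bounded weak solution $v$ of $-\Delta v + u\cdot\nabla v = 0$ on $B$ that has direction-dependent limits at the origin, which immediately yields the desired discontinuity. Motivated by the fact that in dimension $n \ge 3$ degree-$0$ homogeneous functions belong to $H^1$ near the origin (since $\int_0^1 r^{n-3}\,dr < \infty$), I would seek $v$ in separated form $v(x) = f(|x|)\,\chi(x/|x|)$, where $\chi$ is a non-constant smooth function on $S^{n-1}$ and $f$ is a radial profile with $f(r) \to A \ne 0$ as $r \to 0^+$. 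This gives $v(x) \to A\chi(x/|x|)$ along rays, which is direction-dependent. Correspondingly I would take $u$ of the form $u(x) = c|x|^{-(n-1)}\hat r + |x|^{-1} Y(x/|x|)$, where the radial term $|x|^{-(n-1)}\hat r$ is the standard divergence-free source field and $Y$ is a smooth divergence-free (on $S^{n-1}$) tangential vector field.

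Substituting the ansatz into the PDE and separating variables reduces the problem to (i) an eigenvalue equation on the sphere, $-\Delta_{S^{n-1}}\chi + Y\cdot\nabla_{S^{n-1}}\chi = -\mu\chi$, whose spectrum contains positive eigenvalues with non-constant eigenfunctions (e.g., any spherical harmonic when $Y \equiv 0$, since divergence-freeness of $Y$ makes the first-order perturbation skew-symmetric, so the real spectrum is nonnegative and strictly positive off constants); and (ii) a second-order linear ODE for $f$,
\[
f''(r) + \frac{n-1}{r}\,f'(r) - \frac{c}{r^{n-1}}\,f'(r) + \frac{\mu}{r^2}\,f(r) = 0.
\]
A dominant-balance analysis as $r \to 0^+$ (or a substitution like $t=1/r^{n-2}$) identifies, for $c > 0$, two linearly independent solutions: one that decays like $\exp(-c/((n-2)r^{n-2}))$, the other regular with $f(r) \to A \ne 0$. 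Selecting the latter and pairing with a non-constant $\chi$ yields a bounded $v \in H^1(B) \cap L^\infty(B)$; standard approximation arguments confirm the weak formulation holds up to the origin. The $L^p$-norm of $u$ is governed by the radial singularity, giving $u \in L^p(B)$ for $p < n/(n-1)$.

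The main obstacle is to extend this construction beyond the range $\lambda \in (n(n-2)/(n-1),\,n-1)$ afforded by the simple separable ansatz to the full range $\lambda \in (0, n-1)$, especially $\lambda$ close to $0$ where the drift must be only barely supercritical. The strategy I would pursue is a multi-scale construction: place rescaled copies of a carefully chosen divergence-free building block on dyadic annular shells $\{2^{-k-1} \le |x| \le 2^{-k}\}$, each block enacting a partial angular twist of $\chi$, with per-shell $L^{n-\lambda}$ contributions designed to be summable to a finite total, while the accumulated twists across scales force $v$ to exhibit direction-dependent limits at the origin. The technical crux is the gluing: one must verify that the pieced-together drift remains globally divergence-free across shell interfaces, that the pieced-together $v$ lies in $H^1(B) \cap L^\infty(B)$ as a genuine weak solution, and that the shell-to-shell angular twists combine coherently (rather than cancel) to produce the nontrivial direction-dependent limit. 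This controlled iteration across infinitely many scales, while keeping all relevant norms finite, is where the bulk of the analytic work will reside.
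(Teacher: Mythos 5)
There is a genuine gap, and it sits at the heart of your construction. Your radial building block $c|x|^{-(n-1)}\hat r$ is \emph{not} divergence-free on the ball: its flux through every sphere $\partial B_\delta(0)$ equals $c\,|S^{n-1}|$ independently of $\delta$, so as a distribution $\divr u = c\,|S^{n-1}|\,\delta_0 \neq 0$. This cannot be dismissed by a capacity argument (as one can for milder singularities, where $u\in W^{1,1}_{\mathrm{loc}}$ and the point has zero capacity), because here $|\nabla u|\sim |x|^{-n}\notin L^1_{\mathrm{loc}}$ and the nonzero flux is exactly the obstruction. Worse, the defect is structural rather than technical: a weakly divergence-free field has zero net flux through every small sphere, so it cannot carry a nonvanishing purely radial component near the origin; any repair forces inflow in some solid angles to be balanced by outflow in others, which destroys the separated ansatz $v=f(|x|)\chi(x/|x|)$ on which your explicit solution rests. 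Since the whole point of the statement (and of the conjecture of Seregin, Silvestre, {\v{S}}ver{\'a}k and Zlato{\v{s}} it confirms) is that the solenoidal constraint alone does not save continuity, producing the discontinuity with a drift that has a point source at the origin does not prove the theorem. (A smaller issue: with your sign convention $-\Delta_{S^{n-1}}\chi + Y\cdot\nabla_{S^{n-1}}\chi=-\mu\chi$, divergence-freeness of $Y$ forces $\mu\le 0$, not $\mu>0$; the radial ODE still admits a solution with nonzero limit, but the spectral claim as stated is wrong.)

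Even granting the explicit part, it only yields $u\in L^{p}$ for $p<n/(n-1)$, i.e.\ $\lambda$ close to $n-1$ --- essentially the regime of the already known $L^1$-type example in dimension $3$ --- whereas the content of the theorem is the full range $\lambda\in(0,n-1)$, in particular $\lambda$ arbitrarily small, where the drift is only barely below the critical integrability $L^n$. For that range you offer a dyadic multi-scale ``twisting'' scheme, but the decisive points (gluing the shells so the field stays weakly divergence-free across interfaces, making the angular twists accumulate rather than cancel, and verifying that the glued $v$ is a bounded $H^1$ weak solution) are exactly the hard part and are not carried out, so the proof is incomplete precisely where the theorem is nontrivial. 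For comparison, the paper proceeds quite differently: it builds a single solenoidal drift from a generalized stream function in mixed $(r,z)$ coordinates, with the mild pointwise bound $|u|\le C|x|^{-1-\alpha}$ (hence $u\in L^{n-\lambda}$ for arbitrarily small $\lambda$ by taking $\alpha$ small), and proves the discontinuity not by an explicit solution but via the stochastic representation of the elliptic equation: a deterministic cone-exit lemma plus reflection-principle estimates give a uniform-in-$\epsilon$ lower bound $v_\epsilon(0,\delta,0,\dots,0)\ge\kappa$ for truncated smooth drifts $u_\epsilon$, and a compactness argument passes to a bounded weak solution with drift $u$ that is discontinuous at the origin.
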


H\"older continuity can easily be derived from Harnack's inequality. Thus \Cref{thm_elliptic} immediately implies that the Harnack inequality for the elliptic equation \eqref{toy1_elliptic} fails for the supercritical drifts constructed in \Cref{thm_elliptic}. \par

Before our result, we are only aware of an example showing the loss of continuity for a supercritical drift $u \in L^1$ in dimension $n=3$ constructed by Seregin, Silvestre, {\v{S}}ver{\'a}k and Zlato{\v{s}} in \cite{seregin2012divergence}.

\subsection{Connections with the axi-symmetric Navier-Stokes equations} \label{subsec_NS}

Similar to what Nash had in mind, many mathematicians tried to prove uniqueness and smoothness of solutions $u$ to the equations governing fluid motions by assuming some a priori regularity of $u$, but in most cases, there is a big gap between the regularity we need to assume and the regularity we can prove with the available techniques.

An important example is the axi-symmetric Navier-Stokes equations in $\R^3 \times [0,\infty)$. In cylindrical coordinate system of $\R^3$, a velocity field $u=(u_r, u_{\theta}, u_z)$ is given by
\[ u = u_r \mathbf{e}_r(r,\theta,z) + u_{\theta} \mathbf{e}_{\theta}(r,\theta,z) 
     + u_z \mathbf{e}_{z}(r,\theta,z). \]
Assume $(u,p)$ is a solution to the Navier-Stokes equations which is axi-symmetric with respect to $z$ axis, then we have
\begin{equation} \label{navierstokes_cylindrical}
  \begin{split}
  \partial_t u_r + u \cdot \nabla u_r - r^{-1}u_{\theta}^2 + \partial_r p &= \Delta u_r - r^{-2}u_r \\
  \partial_t u_{\theta} + u \cdot \nabla u_{\theta} - r^{-1}u_ru_{\theta} &= \Delta u_{\theta} - r^{-2}u_{\theta} \\
  \partial_t u_z + u \cdot \nabla u_z + \partial_z p &= \Delta u_z \\
  \divr u &= 0
  \end{split}.
\end{equation}
\par

The question of global regularity of the Navier-Stokes equations in $\R^3 \times [0,\infty)$ is still open, even in axi-symmetric case. As a special case which still captures the main properties and difficulties of the original equations, most remarkably the natural scaling invariance, the axi-symmetric equation \eqref{navierstokes_cylindrical} is intensively studied (\cite{ladyzhenskaya1968unique}, \cite{ukhovskii1968axially}, \cite{chen2008lower}, \cite{koch2009liouville}, \cite{chen2009lower}, \cite{seregin2009type}, \cite{lei2011liouville}, \cite{wei2016regularity} etc.). An important observation is that the quantity $\chi = ru_{\theta}$ satisfies the following parabolic equation
\begin{equation} \label{axissymmetricequation1}
  \partial_t \chi - \Delta \chi + (u \cdot \nabla) \chi + 2r^{-1} \partial_r \chi = 0.
\end{equation}
The parabolic equation \eqref{axissymmetricequation1} with nonzero divergence-free drift $u$ has been shown to be very useful for excluding singularities of the first type for axi-symmetric Navier-Stokes equations (\cite{chen2008lower}, \cite{koch2009liouville}, \cite{chen2009lower}, \cite{seregin2009type}, etc.), i.e. the solutions $u$ developing singularities at time $T$ with
\begin{equation} \label{type1}
  |u| \leq \frac{C}{|r|} \quad \text{or} \quad |u| \leq \frac{C}{\sqrt{T-t}}.
\end{equation}
The idea is as follows: Given $u \in L_t^{\infty}L_x^2 \bigcap L_t^2H_x^1$, using the Gaussain bound \eqref{nash}, one can prove that $\chi$ is locally bounded. If we assume $\chi$ is locally H\"older continuous, then by \cite{wei2016regularity} $u$ is regular. By condition \eqref{type1}, the velocity field $u$ is subcritical or critical, hence the H\"older continuity of $\chi$ can be proved using De Giorgi-Nash-Moser iteration (\cite{chen2008lower}, \cite{chen2009lower}).
\par

The global regularity of the axi-symmetric Navier-Stokes equations in $\R^3 \times [0,\infty)$ would be solved if one could show the H\"older continuity of $\chi$ for supercritical drifts $u$ in the energy class $L_t^{\infty}L_x^2 \bigcap L_t^2H_x^1$. This motivates the study of the following toy model in $\R^3$,
\begin{equation} \label{axissymmetricequation2}
  \partial_t v - \Delta v + (u \cdot \nabla) v + 2r^{-1} \partial_r v = 0.
\end{equation}
where $u \in L_t^{\infty}L_x^2 \bigcap L_t^2H_x^1$ is a weakly divergence-free vector field and possibly supercritical. Here we do not assume any relation between $u$ and $v$. However, we show that the H\"older continuity of $v$ in \eqref{axissymmetricequation2} fail for certain supercritical drifts in the natural energy space $L_t^{\infty}L_x^2 \bigcap L_t^2H_x^1$.

\begin{theorem} \label{thm_NS}
For any $\lambda \in (0,2)$, any $\alpha \in (0,\frac{\lambda}{3-\lambda})$ and any $q \in \big[ 1, \frac{4+2\alpha}{1+2\alpha} \big)$, there exists time-dependent divergence-free drift $u \in L^{\infty}_tL_x^{3-\lambda} \bigcap L^q_tH^1_x$ satisfying
\[ |u| \leq \min \Big\{
    \frac{C(\lambda)}{|x|^{1+\alpha}}, \frac{C(\lambda)}{(T-t)^{{(1+\alpha)/}{(2+\alpha)}}} \Big\}
\]
with the following property. The parabolic equation \eqref{axissymmetricequation2} in $B \times [0,T]$, where $T \in (0,\infty)$ and $B$ is the unit ball of $\,\R^3$, has a bounded weak solution which is not continuous at the origin at the finite time $T>0$. In particular, $v/r$ which corresponds to $u_\theta$ in the axi-symmetric Navier-Stokes equations \eqref{navierstokes_cylindrical} blows up with rate $r^{-1}$.
\end{theorem}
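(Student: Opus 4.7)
The plan is to construct a backward self-similar blow-up, adapting the mechanism used for \Cref{thm_parabolic} to accommodate the additional transport term $2r^{-1}\partial_r v$ and the sharper pointwise decay required of $u$. Let $\mu=1/(2+\alpha)$, $\rho(t)=(T-t)^{\mu}$, and seek an axi-symmetric solution in $\R^3$ of the form
\begin{equation*}
  v(x,t) \;=\; V\!\Bigl(\tfrac{x}{\rho(t)}\Bigr), \qquad
  u(x,t) \;=\; \rho(t)^{-(1+\alpha)}\, U\!\Bigl(\tfrac{x}{\rho(t)}\Bigr),
\end{equation*}
where $V$ is a bounded profile on $\R^3$ and $U$ is a divergence-free axi-symmetric vector field on $\R^3$ satisfying $|U(y)|\leq C\min\bigl(1,|y|^{-(1+\alpha)}\bigr)$. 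The exponent of $\rho$ in $u$ is tuned so that the prescribed pointwise bounds $|u|\leq C|x|^{-(1+\alpha)}$ and $|u|\leq C(T-t)^{-(1+\alpha)/(2+\alpha)}$ follow automatically, and $u$ is divergence-free as soon as $U$ is.

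Substituting the ansatz into \eqref{axissymmetricequation2} and multiplying by $(T-t)$, the terms $\partial_t v$ and $u\cdot\nabla v$ appear at order $(T-t)^{0}$, while $\Delta v$ and $2r^{-1}\partial_rv$ carry an extra factor $(T-t)^{\alpha/(2+\alpha)}$ and are lower order as $t\to T^{-}$. At leading order the self-similar equation is therefore the pure transport
\begin{equation*}
  \mu\, y\cdot\nabla_y V \;+\; U(y)\cdot\nabla_y V \;=\; 0.
\end{equation*}
I would build $U$, borrowing the geometric idea of \Cref{thm_parabolic}, so that the combined axi-symmetric flow $y\mapsto \mu y+U(y)$ has the origin as a global attractor; taking $V$ constant along its characteristics with value $c\neq 0$ in the basin of the origin and $0$ at infinity, and unwinding the ansatz, yields $v(x,T)=0$ for $x\neq 0$ but $v(0,t)\equiv c$, the advertised discontinuity at the origin. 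Since $V\equiv c$ in a shrinking cylinder around the $z$-axis, one also obtains $v(x,t)/r\sim c/r$ near the axis, giving the $r^{-1}$ blow-up.

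The norm bounds are verified by direct rescaling on the unit ball $B$. One has
\begin{equation*}
  \int_B |u(x,t)|^{3-\lambda}\,dx \;\lesssim\; \int_0^1 \min\!\bigl(\rho^{-(1+\alpha)},\,r^{-(1+\alpha)}\bigr)^{3-\lambda} r^2\,dr,
\end{equation*}
which stays bounded as $\rho\to 0$ precisely when $(1+\alpha)(3-\lambda)<3$, i.e.\ when $\alpha<\lambda/(3-\lambda)$, matching the hypothesis of the theorem. An analogous computation for $\|\nabla u(\cdot,t)\|_{L^{2}_x}$ introduces an extra factor $\rho^{-1}$; the resulting time-integrability of $\|\nabla u(\cdot,t)\|_{L^2_x}^q$ yields exactly the threshold $q<(4+2\alpha)/(1+2\alpha)$.

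The main obstacle is bridging the formal transport equation and the genuine bounded weak solution of \eqref{axissymmetricequation2}: the diffusion and the singular $2r^{-1}\partial_r v$ term, though subleading in $\rho$, do not vanish. Since $\tfrac{2}{r}e_r$ is itself divergence-free in $\R^3$ (in axi-symmetric form $\divr(fe_r)=r^{-1}\partial_r(rf)$ vanishes at $f=2/r$), one absorbs the extra term into an effective drift $u+\tfrac{2}{r}e_r$, whose contribution on the self-similar scale $|x|\sim \rho(t)$ is dominated by $u$ because $|u|\sim \rho^{-(1+\alpha)}\gg r^{-1}$ for $r\sim \rho\ll 1$ and $\alpha>0$. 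The subleading diffusion is handled as a small perturbation through a comparison/maximum-principle argument: the true $v$ is trapped between two self-similar barriers of the form above having distinct limits on the axis versus on any fixed sphere as $t\to T$, which delivers the required bounded weak solution with the discontinuity at the origin.
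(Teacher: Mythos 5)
Your scaling bookkeeping is consistent with the paper: the threshold $\alpha<\lambda/(3-\lambda)$ for the $L^{3-\lambda}_x$ bound and the threshold $q<(4+2\alpha)/(1+2\alpha)$ for $L^q_tH^1_x$ come out of exactly the computations the paper relies on, and the underlying mechanism (transport of a bump into the origin at rate $(T-t)^{1/(2+\alpha)}$) is the same. However, two steps of your plan would fail as written. First, the treatment of the term $2r^{-1}\partial_r v$. You keep a purely self-similar drift and argue the extra term is ``dominated'' because $2/r\ll|u|$ at scale $r\sim\rho(t)$. But the inequality actually needed in any comparison argument is not a crude domination: in the paper's \Cref{subsolution} the quantity $f$ controlling the sub-solution property vanishes to \emph{second} order at the centre of the travelling bump ($f(1,0)=0$, $\nabla f(1,0)=0$), so a relative perturbation of size $(T-t)^{\alpha/(2+\alpha)}$ coming from $2r^{-1}\partial_r$, which has the unfavourable sign on the near side of the bump ($r<h(t)$), destroys the sub-solution inequality in a whole neighbourhood of the bump centre; ``subleading in $\rho$'' is not enough. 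The paper instead cancels the term \emph{exactly}: it adds the stream-function piece $\Phi_0$ (equal to $2z$ in the cone), whose induced velocity is $(-2/r,0)$, so that \eqref{axissymmetricequation2} with the drift \eqref{ns_velocity} coincides with \eqref{toy1} with the drift of \Cref{velocity} in the region where the barrier lives. Moreover $2e_r/r$ is divergence-free only away from the axis (distributionally it carries a line source) and its gradient $\sim r^{-2}$ is not locally square integrable in $\R^3$; the space-time cutoff $\bar\varrho(r,t)$ near the axis is precisely what restores both points and what produces the $L^q_tH^1_x$ membership at the stated threshold. Your drift omits the compensating piece entirely, so your $H^1$ computation does not address the real difficulty.

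Second, the limiting-profile picture is not viable as stated. Since $U$ is divergence-free, the field $\mu y+U(y)$ has divergence $3\mu>0$, so its flow expands volume and no open set can be attracted to the origin of self-similar space; the ``basin of the origin'' on which you set $V\equiv c$ cannot exist. Even formally, a piecewise-constant transported profile makes $v$ discontinuous for every $t<T$, so it can neither be nor directly bound an exact solution of the parabolic problem. What the paper does instead is to exploit the stationary point $(r,z)=(h(t),0)$ of the rescaled dynamics: it builds a smooth sub-solution $g$, damps it by $\exp\big(-c_0\int_0^t h(s)^{-2}ds\big)$ (finite because $2/(2+\alpha)<1$, \Cref{diffusion_control}), uses antisymmetry in $\theta$ to get the two-sided bounds $\pm\kappa$, and then obtains the bounded weak solution as a limit of classical solutions with the regularized drifts of \Cref{velocity_trun}, using uniform local H\"older estimates away from the origin and a diagonal argument so that the $\pm\kappa$ bounds survive in the limit. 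Your final sentence about trapping $v$ between two barriers gestures at this, but supplies neither valid barriers for the full equation nor the regularization-and-limit step that actually produces the weak solution with the discontinuity, so the proposal does not close as written.
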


\Cref{thm_NS} shows that the conditional regularity result of Chen, Strain, Yau and Tsai (Theorem 3.1 in \cite{chen2008lower}) is sharp. This suggests the toy model of the the axi-symmetric Navier-Stokes equations in space dimension $n=3$ can develop a finite-time singularity at a linear level. Although the dynamics in the linearized equation is significantly simpler than the original nonlinear equation, we still hope this can shed some light and lead to further progress on the Navier-Stokes equations.

\subsection{Lower dimension $n=2$} \label{lowerdim}

Our results \Cref{thm_elliptic} and \Cref{thm_parabolic} are only valid in the dimensions $n \geq 3$. Interestingly, they are false in dimension $n=2$. \par

Indeed, for the solution $v$ of the elliptic equation \eqref{toy1_elliptic} in the unit ball $B \subset \R^2$, Seregin, Silvestre, {\v{S}}ver{\'a}k and Zlato{\v{s}} \cite{seregin2012divergence} showed the following a priori continuity estimate for $v$ (cf. Theorem 1.4 in \cite{seregin2012divergence}): for any $r \in (0,1)$,
\[ \sup_{x \in B_r(0)} |v(x)-v(0)| 
    \leq \frac{C(1+\|u\|_{L^1(B)})^{1/2}}{\sqrt{-\log r}} \|v\|_{L^\infty(B)}. \]
\par

For the solution $v$ of the parabolic equation \eqref{toy1} in $\R^2 \times [0,T]$ with $T>0$, Silvestre, Vicol and Zlato{\v{s}} \cite{silvestre2013loss} proved that, for time-independent drift $u$, we have the following a-prior continuity estimate for $v$ (cf. Theorem 1.4 in \cite{silvestre2013loss}), for any $r \in (0,1)$ and $t \in (0,T)$,
\[ \sup_{x \in B_r(0)} |v(x,t)-v(0,t)| 
    \leq \frac{C(1+\|u\|_{L^1_{\text{loc}}})\|v_0\|_{C^2 \cap W^{4,1}}}{\sqrt{-\log r}}
    \Big(1+\frac{1}{t}\Big). \]
\par

%Silvestre, Vicol and Zlato{\v{s}} also constructed time-dependent drifts in dimension $n=2$ in \cite{silvestre2013loss}, for which the solution of parabolic equation \eqref{toy1} loses the continuity at finite time. Our construction has also been inspired by their construction, which we shall discuss in more details later.

\subsection{Organization of the paper}

In \Cref{chap_drift}, we will give our key construction of the supercritical drifts $u \in L^{n-\lambda}(\R^n)$ for arbitrarily small $\lambda>0$. This construction will serve both the parabolic and the elliptic case. In \Cref{chap_para}, we prove the loss of continuity in the parabolic case, \Cref{thm_parabolic}. In \Cref{chap_NS}, we apply the same idea to the toy model \eqref{axissymmetricequation2} of the axi-symmetric Navier-Stokes equations in space dimension $3$ and give the proof of \Cref{thm_NS}. Finally, in \Cref{chap_elli}, we prove \Cref{thm_elliptic} using a stochastic interpretation of the elliptic equation \eqref{toy1_elliptic}.

\subsection{Acknowledgement}

I am indebted to professor Vladimir {\v{S}}ver{\'a}k for the inspiring discussions and professor Michael Struwe for his careful reading and very helpful feedback.

% !TEX root = toy.tex

\section{Construction of supercritical drifts} \label{chap_drift}

In this section we construct suitable supercritical drifts. Our construction is inspired by the work of Silvestre, Vicol and Zlato{\v{s}} \cite{silvestre2013loss}, where they proved the loss of continuity for the solutions of the parabolic equations with fractional Laplacian in dimension $n=2$. We design supercritical drifts in a way such that the velocity field in a cone area transports the scalar $v$ to the origin of the cone. Then the loss of continuity takes place at the origin, where the drift $u$ is singular. \par

However, as we discussed in \Cref{lowerdim}, the cases of dimension $n=2$ and dimensions $n \geq 3$ are very different. In dimensions $n \geq 3$, a direct representation of velocity fields by their stream functions is not available. For this reason, we set up a product-type coordinate system in $\R^n$ mixed by Cartesian coordinate in $\R$ and hyperspherical coordinate in $\R^{n-1}$. For the velocity fields with radial symmetry in hyperspherical coordinate system of $\R^{n-1}$, we propose a notion of generalized stream functions. The function spaces in \cite{silvestre2013loss} are H\"older spaces in $\R^2$. We are interested in $L^{n-\lambda}$ spaces in $\R^n$ for $n \geq 3$, which needs a different design of velocity fields and a more refined control on the set of singularities.

Let $(x_1, x_2, \ldots, x_n) \in \R^n$ be the Cartesian coordinate in $\R^n$. Consider the scalar equation \eqref{toy1} in a mixed coordinate system of $\R^n$
$$(r,z,\theta,\varphi_1,\ldots,\varphi_{n-3}) \in [0,\infty) \times \R \times \Big[-\frac{\pi}{2},\frac{3\pi}{2}\Big) \times [0,\pi]^{n-3} $$
with the transformation
\begin{equation} \label{transformation}
	\begin{cases}
	x_1 = z \\
	x_2 = r \cos\theta \\
	x_3 = r \sin\theta \cos\varphi_1 \\
	x_4 = r \sin\theta \sin\varphi_1 \cos\varphi_2 \\
	\quad \ldots \\
	x_{n-1} = r \sin\theta \sin\varphi_1 \ldots \cos\varphi_{n-3} \\
	x_n = r\sin\theta \sin\varphi_1 \ldots \sin\varphi_{n-3}
	\end{cases}.
\end{equation}
We denote the unit vectors in this mixed coordinate system by $(\mathbf{e}_r, \mathbf{e}_z, \mathbf{e}_\theta, \mathbf{e}_{\varphi_1}, \ldots)$. In Cartesian coordinate system, we use the traditional notation $(\mathbf{e}_1, \mathbf{e}_2, \mathbf{e}_3, \mathbf{e}_4, \ldots)$. In this paper, we use $|\cdot|$ to denote standard Euclidean norm. Since the drift we construct only depends on $r$ and $z$, for the simplicity of notation, we also use $|(r,z)|:=\sqrt{r^2+z^2}$.
\par

For the parabolic case, namely throughout \Cref{chap_para} and \Cref{chap_NS}, we consider functions which only depend on $r,\,z$, $\theta$ and $t$. For those functions $f$ which are independent of $\varphi_i, 1 \leq i \leq n-3$, we will only write the first four effctive arguments $f(r,z,\theta,t)$. The gradient of $f$ is given by
\[ \nabla f = \mathbf{e}_r \partial_r f + \frac{\mathbf{e}_\theta}{r} \partial_\theta f + \mathbf{e}_z \partial_z f \]
and the Laplacian is given by
\[
	\Delta f = \frac{1}{r^{n-2}} \partial_r( r^{n-2} \partial_r f ) 
		+ \frac{n-3}{r^2\tan \theta} \partial_\theta f
		+ \frac{1}{r^2} \partial_{\theta\theta} f
		+ \partial_{zz} f.
\]
\par

We also present a method to construct divergence-free vector fields with radial symmetry in hyperspherical part $\R^{n-1}$ from scalar-valued functions.

\begin{lemma} \label{div_free}
Let $n \geq 3$. For any locally integrable function $\Psi:[0,\infty) \times \R \rightarrow \R$ which belongs to $C^2$ in any compact subset of $([0,\infty) \times \R) \backslash \{0\}$ and the velocity field $u := (u_r, u_z, 0, \ldots, 0): \R^n \rightarrow \R^n$ pointwisely defined by
\begin{equation} \label{stream} 
	(u_r, u_z) := \Big( -\frac{\partial_z \Psi}{r^{n-2}}, \frac{\partial_r \Psi}{r^{n-2}} \Big),
\end{equation}
assume the pointwise derivative $\nabla u$ belongs to $L^1(\R^n)$. Then $u \in W^{1,1}_{\text{loc}}(\R^n)$ and $u$ is a weakly divergence-free drift. The scalar function $\Psi$ is called a generalized stream function.
\end{lemma}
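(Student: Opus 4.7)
The plan is to treat the two conclusions in turn, localizing the difficulty at the singular set $S := \{x \in \R^n : x_2 = \cdots = x_n = 0\}$ (the $x_1$-axis), which has codimension $n-1 \geq 2$.

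First, I would verify the pointwise identity $\divr u = 0$ off $S$. Since $u = u_r \mathbf{e}_r + u_z \mathbf{e}_z$ depends only on $(r,z)$, the divergence in the given mixed coordinate system simplifies to
\[
\divr u = \frac{1}{r^{n-2}} \partial_r(r^{n-2} u_r) + \partial_z u_z.
\]
Substituting \eqref{stream} and using that $\Psi \in C^2$ off the origin of $(r,z)$-space (so that mixed partials commute) yields $\divr u \equiv 0$ pointwise on $\R^n \setminus S$.

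Next, to promote this to the two global assertions of the lemma, the plan is to introduce a cutoff $\eta_\epsilon(x) := \eta(r/\epsilon)$, with $\eta : [0,\infty) \to [0,1]$ smooth, $\eta \equiv 0$ on $[0,1]$, and $\eta \equiv 1$ on $[2,\infty)$. The central estimate is a Hardy-type inequality associated to the codimension-$(n-1)$ subspace $S$: for $n \geq 3$ we have $1 < n-1$, and the hypothesis $\nabla u \in L^1(\R^n)$ combined with a truncation/density argument on $\R^n \setminus S$ yields
\[
\int_{\R^n} \frac{|u|}{r}\,dx \lesssim \int_{\R^n} |\nabla u|\,dx < \infty.
\]
This bound gives $u \in L^1_{\text{loc}}(\R^n)$ and, since $|\nabla \eta_\epsilon| \leq C/r$ with support in the thin shell $\{\epsilon \leq r \leq 2\epsilon\}$, controls the cross term
\[
\int_{\R^n} |u|\,|\nabla \eta_\epsilon|\,dx \leq C \int_{\{\epsilon \leq r \leq 2\epsilon\}} \frac{|u|}{r}\,dx \xrightarrow{\epsilon \to 0} 0
\]
by absolute continuity of the integral. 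Combined with $\eta_\epsilon \nabla u \to \nabla u$ in $L^1(\R^n)$ via dominated convergence, this identifies the pointwise $\nabla u$ as the weak gradient, so $u \in W^{1,1}_{\text{loc}}(\R^n)$.

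Finally, for weak divergence-freeness, I would test against $\phi \in C_c^\infty(\R^n)$ and decompose
\[
\int_{\R^n} u \cdot \nabla \phi\,dx = \int u \cdot \nabla(\phi \eta_\epsilon)\,dx - \int u \cdot \phi\,\nabla \eta_\epsilon\,dx.
\]
Since $\phi \eta_\epsilon$ is supported in $\{r \geq \epsilon\}$ where $u$ is classically $C^1$ and pointwise divergence-free, the first integral equals $-\int (\divr u) \phi \eta_\epsilon\,dx = 0$. The second integral tends to zero by the same cross-term bound obtained above. Letting $\epsilon \to 0$ gives $\int u \cdot \nabla \phi\,dx = 0$ for every test function, completing the proof. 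The main obstacle throughout is controlling this cross term $u \otimes \nabla \eta_\epsilon$ across the singular axis $S$; Hardy's inequality, available precisely because $S$ has codimension $n-1 \geq 2 > 1$, is the key tool that overcomes it.
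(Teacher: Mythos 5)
Your overall architecture is sound, and it is genuinely different from the paper's. After the same coordinate computation (the divergence of a field depending only on $(r,z)$ is $\partial_r u_r+\frac{n-2}{r}u_r+\partial_z u_z$, which vanishes off the axis because the mixed partials of the $C^2$ function $\Psi$ commute), the paper finishes in one line: the singular set has zero $W^{1,1}(\R^n)$-capacity (it is of codimension at least $2$), hence is removable, so $\nabla u\in L^1$ gives $u\in W^{1,1}_{\mathrm{loc}}$ and the weak divergence is the a.e.\ pointwise one. Your cutoff argument with $\eta_\epsilon=\eta(r/\epsilon)$ makes this removability explicit and self-contained, and you correctly localize the difficulty at the whole axis $S$ rather than just the origin; that is a legitimate and arguably more careful route.

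The one genuine soft spot is the Hardy step as you state it. The global inequality $\int_{\R^n}|u|/r\,dx\lesssim\int_{\R^n}|\nabla u|\,dx$ is false without decay: $\Psi=\frac{c}{n-1}r^{n-1}$ satisfies every hypothesis of the lemma, yields the constant field $u=c\,\mathbf{e}_z$ with $\nabla u=0$, and the left-hand side is infinite. Worse, the proposed justification ``by truncation/density on $\R^n\setminus S$'' is circular: approximating $u$ by functions vanishing near $S$ produces exactly the cross term $\int|u|\,|\nabla\eta_\epsilon|\sim\int_{\{\epsilon\le r\le2\epsilon\}}|u|/r\,dx$ whose smallness is the thing to be proved. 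What is true, and suffices for all three uses you make of the estimate ($u\in L^1_{\mathrm{loc}}$, vanishing of the cross term, identification of the weak gradient), is a local version with a reference term: writing the transverse variable in polar coordinates, $u(s\omega,x_1)=u(\sigma\omega,x_1)-\int_s^\sigma\partial_t u(t\omega,x_1)\,dt$ with $\sigma\in[\frac12,1]$, averaging in $\sigma$ and using Fubini gives
\[
\int_K \frac{|u|}{r}\,dx \;\le\; C(K)\Big(\int_{K'}|\nabla u|\,dx+\int_{K'\cap\{\frac12\le r\le 1\}}|u|\,dx\Big),
\]
where the last term is finite because $u$ is continuous off the axis, and the convergence of $\int_0^t s^{n-3}\,ds$ in the Fubini step is precisely where the codimension condition $n-1\ge 2$ enters. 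With this (or any equivalent integrate-along-rays bound) in place of the displayed global inequality, your cross-term estimate, the identification of the pointwise gradient as the weak one, and the final testing argument for $\int u\cdot\nabla\phi\,dx=0$ all go through as written.
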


\begin{proof}
First, we compute the divergence operator for velocity field $u := (u_r, u_z, 0, \ldots, 0)$ which only depends on $r$ and $z$ in the mixed coordinate system. Let $x:=(y,z) \in \R^{n-1} \times \R$, the Cartesian coordinate of $u$ is given by $$\Big( \frac{y}{|y|}u_r, u_z \Big).$$ Then computing its divergence directly in Cartesian coordinate gives
\[ \divr u = \partial_r u_r + \frac{n-2}{r} u_r + \partial_z u_z. \]
Note that $\divr u = 0$ formally corresponds to
\begin{equation}  \label{div_free_eq2} \partial_r(r^{n-2}u_r) + \partial_z(r^{n-2}u_z) = 0. \end{equation}
\par
Since the set $\{0\}$ has $W^{1,1}(\R^n)$-capacity zero and $\nabla u \in L^1_{\text{loc}}([0,\infty) \times \R)$, $u \in W^{1,1}_{\text{loc}}(D)$. One can easily verify  $\divr u = 0$ by a formal computation.
\end{proof}

Now, we construct divergence-free drifts from their generalized stream functions $\Psi$.

\begin{lemma} \label{velocity}
Fix any $\lambda \in (0,n-2)$ and any $\alpha \in (0,\frac{\lambda}{n-\lambda})$, define the generalized stream function $\Psi|_D: D:=[0,2] \times [-2,2] \rightarrow \R$ as follows. Let
\begin{align} \label{stream_potential}
	\Psi=\frac{1}{2(n-2-\alpha)}\Psi_s,
\end{align}
where $\Psi_s$ is given by
\begin{equation*}
	\begin{split}
    \Psi_s(r,z) = \begin{cases}
    	-r^{n-1}, &0 \leq \frac{3r}{4} \leq -z \\
    	(r+z)^{n-2-\alpha} - (r-z)^{n-2-\alpha}, \quad &-\frac{r}{2} \leq z \leq \frac{r}{2} \\
    	r^{n-1}, &0 \leq \frac{3r}{4} \leq z
    \end{cases}
    \end{split}
\end{equation*}
and
\begin{align*}
	\Psi_s(r,z) =& -r^{n-1} \varrho\Big(\frac{-4z-2r}{r}\Big) \\
		&+ \big[ (r+z)^{n-2-\alpha} - (r-z)^{n-2-\alpha} \big] \varrho\Big(\frac{3r+4z}{r}\Big),
			\,\, \frac{r}{2} \leq -z \leq \frac{3r}{4}, \\
	\Psi_s(r,z) =& r^{n-1} \varrho \Big( \frac{4z-2r}{r} \Big) \\
		&+ \big[ (r+z)^{n-2-\alpha} - (r-z)^{n-2-\alpha} \big] \varrho \Big( \frac{3r-4z}{r} \Big),
			\,\, \frac{r}{2} \leq z \leq \frac{3r}{4}.
\end{align*}
Here, $\varrho: \R \rightarrow \R$ is a non-negative smooth function with $\varrho(s) = 0$ for $s \leq 0$ and $\varrho(s) = 1$ for $s \geq 1$. In the region $\{r \geq 2\} \bigcup \{|z| \geq 2\}$, we smoothly extend $\Psi|_D$ such that $(u_r, u_z)$ obtained from \eqref{stream} is smooth in $\{r \geq 2\} \bigcup \{|z| \geq 2\}$ and of exponential decay at infinity. Then $\Psi$ is smooth away from $x=0$, and $u=(u_r, u_z, 0, \ldots, 0)$ is in $L^{n-\lambda}(\R^n)$ with respect to the standard metric $r^{n-2}drdz$ and weakly divergence-free. Furthermore, $u$ satisfies the bound
\[ |u| \leq \frac{C(\lambda)}{|x|^{1+\alpha}}. \]
\end{lemma}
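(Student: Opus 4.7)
The plan is to verify the four claimed properties---smoothness away from the origin, the pointwise bound $|u|\le C(\lambda)|x|^{-1-\alpha}$, membership in $L^{n-\lambda}(\R^n)$, and being weakly divergence-free---by direct case analysis in the five regions defined by the piecewise formula for $\Psi_s$.

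First I would establish the pointwise bound by computing $(u_r,u_z)$ explicitly in each region. In the two cones $\{3r/4\le |z|\}$, $\Psi$ depends only on $r$, so $u_r=0$ and $u_z=\pm(n-1)/[2(n-2-\alpha)]$ is a constant; the bound $|u|\le C|x|^{-1-\alpha}$ is then trivial (vacuous as $|x|\to 0$, and for $|x|$ bounded below it compares two constants). In the middle region $\{|z|\le r/2\}$, both $r\pm z$ are comparable to $r$, so a direct differentiation of $(r+z)^{n-2-\alpha}-(r-z)^{n-2-\alpha}$ yields $\partial_r\Psi,\partial_z\Psi=O(r^{n-3-\alpha})$, whence $|u|\lesssim r^{-1-\alpha}$; since $|x|\sim r$ there, this gives $|u|\le C|x|^{-1-\alpha}$. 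In the two transition zones the smooth cutoff $\varrho$ interpolates between the two formulae, and because $\varrho'$ is bounded while each $\varrho((3r\pm4z)/r)$ contributes an extra $1/r$ upon differentiation, one finds $\partial_r\Psi,\partial_z\Psi=O(r^{n-3-\alpha}+r^{n-2})$ on these zones. The resulting $|u|\lesssim r^{-1-\alpha}+1$ again satisfies the desired bound because $|x|$ stays bounded on $D$, while outside $D$ the smooth exponentially decaying extension obeys it for free.

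Next I would check that $u$ is smooth on $\R^n\setminus\{0\}$. In each region $\Psi$ is built from polynomials in $r$ and from $(r\pm z)^{n-2-\alpha}$; the latter is only invoked where $r\pm z\ge r/4>0$, hence smooth there, and the cutoff $\varrho\in C^\infty$ with $\varrho\equiv 0,1$ outside $[0,1]$ makes $\Psi$ match across region boundaries. The only delicate point is smoothness in Cartesian coordinates along the axis $\{r=0\}$: but $r=0$ with $z\ne 0$ lies inside a cone, where $u_r=0$ and $u_z$ is a constant, giving the constant Cartesian vector $\pm(n-1)/[2(n-2-\alpha)]\,\mathbf{e}_1$, which is plainly smooth.

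Given the pointwise bound, the $L^{n-\lambda}$ estimate reduces to
\[
\int_{|x|\le 1}|u|^{n-\lambda}\,dx\ \le\ C\int_0^1\rho^{\,n-1-(1+\alpha)(n-\lambda)}\,d\rho,
\]
which is finite exactly when $(1+\alpha)(n-\lambda)<n$, i.e.\ $\alpha<\lambda/(n-\lambda)$, the standing hypothesis; the exponentially decaying extension disposes of the region $|x|\ge 1$. Finally, to conclude via \Cref{div_free} that $u$ is weakly divergence-free, I need $\nabla u\in L^1(\R^n)$; one more differentiation gives $|\nabla u|\lesssim |x|^{-2-\alpha}$, and
\[
\int_{|x|\le 1}|\nabla u|\,dx\ \le\ C\int_0^1\rho^{\,n-3-\alpha}\,d\rho<\infty
\]
because $\alpha<\lambda/(n-\lambda)<(n-2)/2<n-2$.

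The main obstacle I anticipate is not any single estimate but rather the bookkeeping in the transition zones: one must check that the factors $\varrho'((3r\pm 4z)/r)\cdot(4/r)$ arising from the cutoff do not degrade the bound obtained in the middle region, and that the matching is smooth enough to make $\Psi\in C^\infty(\R^n\setminus\{0\})$. Once this is in place, the precise condition $\alpha<\lambda/(n-\lambda)$ emerges naturally and only at the final integrability step, which is why it is exactly the range stated in the lemma.
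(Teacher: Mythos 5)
Your proposal is correct and follows essentially the same route as the paper's (very terse) proof: compute $(u_r,u_z)$ explicitly region by region via \eqref{stream}, observe $|u|\lesssim r^{-1-\alpha}$ in the cone $|z|\le 3r/4$ and boundedness elsewhere, check $u\in L^{n-\lambda}$ and $\nabla u\in L^1$ by the radial integrability condition $(1+\alpha)(n-\lambda)<n$, and conclude weak divergence-freeness from \Cref{div_free}. You merely spell out the ``direct computations'' (in particular the transition-zone bookkeeping) that the paper leaves implicit.
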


\begin{proof}
Note that $\Psi$ is continuous and odd in $z$. From \eqref{stream}, we can compute the velocity in the following subregion
\begin{equation} \label{velocity_eq0}
	\begin{split}
    (u_r,&u_z) = \\ &\begin{cases}
    	(0,-\frac{n-1}{2(n-2-\alpha)}), &0 \leq \frac{3r}{4} \leq -z \\
    	\big(-\frac{(r+z)^{n-3-\alpha}}{2r^{n-2}} - \frac{(r-z)^{n-3-\alpha}}{2r^{n-2}},
    			\frac{(r+z)^{n-3-\alpha}}{2r^{n-2}} - \frac{(r-z)^{n-3-\alpha}}{2r^{n-2}} \big), \quad &-\frac{r}{2} \leq z \leq \frac{r}{2} \\
    	(0,\frac{n-1}{2(n-2-\alpha)}), &0 \leq \frac{3r}{4} \leq z
    \end{cases},
    \end{split}
\end{equation}
then direct computations show that $u|_D \in L^{n-\lambda}(D), \, \nabla u|_D \in L^1(D)$ and $u$ satisfies the pointwise bound. Because $u$ is in $C^\infty$ outside $D$ and of exponential decay at infinity, $u \in W^{1,1}(\R^n)$ is weakly divergence-free follows from the fact $\nabla u \in L^1(\R^n)$.
\end{proof}

Next, we truncate $u$ at origin to obtain bounded drifts.

\begin{corollary} \label{velocity_trun}
Let $\Psi,u,\lambda$ and $\alpha$ be as in \Cref{velocity}. For any $\epsilon>0$, there exists $\Psi_\epsilon :[0,\infty) \times \R \rightarrow \R$ with 
\begin{equation} \label{velocity_trun_eq0}
	\Psi_\epsilon = \Psi \text{ in } \big( [0,\infty) \times \R \big) 
		\backslash \{ |(r,z)|<\epsilon, \, -r \leq z \leq r \},
\end{equation}
such that $u_\epsilon \in C^\infty(\R^n)$, $u_\epsilon$ is bounded in $L^{n-\lambda}(\R^n)$ uniformly in $\epsilon$, and
\[ |u_\epsilon| \leq \frac{C(\lambda)}{|x|^{1+\alpha}}, \quad \text{uniformly in } \epsilon, \]
where $u_\epsilon = (u_{\epsilon,r}, u_{\epsilon,z}, 0, \ldots, 0)$ is given by $\Psi_\epsilon$ via \eqref{stream}. Moreover, $\Psi_\epsilon$ is odd in $z$.
\end{corollary}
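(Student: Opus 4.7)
The strategy is to exploit the $(n-2-\alpha)$-homogeneity of $\Psi$ in the inner cone $\{|z|\le r/2\}$, where $\Psi$ coincides with the homogeneous function $\Psi_h(r,z):=\frac{(r+z)^{n-2-\alpha}-(r-z)^{n-2-\alpha}}{2(n-2-\alpha)}$. Since $\Psi_h$ is the source of the blow-up of $u$ at the origin, the plan is to rescale-and-replace it inside a small inner-cone ball of radius $\epsilon$ sitting inside $B_\epsilon^*:=\{|(r,z)|<\epsilon,\ -r\le z\le r\}$, leaving $\Psi$ unchanged everywhere else, in particular in the cap and the transition zone.

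First, once and for all, I fix a template $\tilde{\Psi}:[0,\infty)\times\R\to\R$, independent of $\epsilon$ and odd in $z$, such that (i) $\tilde{\Psi}=\Psi_h$ in an open neighborhood of the unit-scale inner cone boundary $\{|(\rho,\zeta)|=1,\ |\zeta|\le\rho/2\}\cup\{|\zeta|=\rho/2,\ |(\rho,\zeta)|\le 1\}$, and (ii) the associated velocity field $\tilde{u}$ defined by \eqref{stream} is smooth in Cartesian coordinates away from the origin and satisfies $|\tilde{u}(\xi)|\le C(\lambda)|\xi|^{-(1+\alpha)}$. A template of this form can be obtained by setting $\tilde\Psi:=\phi\,\Psi_h$ for a $z$-even Cartesian-smooth cutoff $\phi(\rho,\zeta)=\eta(\rho^2+\zeta^2)$, with $\eta$ smooth, equal to $1$ in a neighborhood of $s=1$ and vanishing to sufficiently high order at $s=0$ to cancel the singularity of $\Psi_h$.

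Next, I define
\[
\Psi_\epsilon(r,z):=\begin{cases}
\epsilon^{\,n-2-\alpha}\,\tilde{\Psi}(r/\epsilon,\,z/\epsilon), &\text{if }|z|\le r/2\text{ and }|(r,z)|<\epsilon,\\
\Psi(r,z), &\text{otherwise.}
\end{cases}
\]
On the spherical part $\{|(r,z)|=\epsilon,\ |z|\le r/2\}$ and the conic part $\{|z|=r/2,\ |(r,z)|\le\epsilon\}$ of the interface, property (i) combined with the $(n-2-\alpha)$-homogeneity of $\Psi_h$ gives the identity $\epsilon^{n-2-\alpha}\tilde{\Psi}(r/\epsilon,z/\epsilon)=\Psi_h(r,z)=\Psi(r,z)$ in a full open neighborhood of the interface, so the two pieces glue smoothly. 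Thus $\Psi_\epsilon=\Psi$ outside $B_\epsilon^*$ as required, $u_\epsilon\in C^\infty(\R^n)$, and $\Psi_\epsilon$ inherits oddness in $z$ from the oddness of $\Psi$ and $\tilde{\Psi}$.

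Finally, the quantitative bounds follow by scaling. Outside $B_\epsilon^*$, $u_\epsilon=u$, so \Cref{velocity} gives both the pointwise bound and the $L^{n-\lambda}$ integrability. In the modification region the identity $u_\epsilon(x)=\epsilon^{-(1+\alpha)}\tilde{u}(x/\epsilon)$ together with (ii) yields $|u_\epsilon(x)|\le C(\lambda)|x|^{-(1+\alpha)}$ uniformly in $\epsilon$. The hypothesis $\alpha<\lambda/(n-\lambda)$ is equivalent to $(1+\alpha)(n-\lambda)<n$, so by a change of variables
\[
\int_{B_\epsilon^*}|u_\epsilon|^{n-\lambda}\,dx=\epsilon^{\,n-(1+\alpha)(n-\lambda)}\int_{B_1^*}|\tilde{u}|^{n-\lambda}\,d\xi,
\]
which is finite and in fact vanishes as $\epsilon\to 0$. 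The main technical obstacle is the construction of the template $\tilde{\Psi}$: producing a Cartesian-smooth $\tilde{u}$ with the sharp pointwise decay requires the cutoff $\phi$ to be carefully matched to the conic geometry at the origin, so that the corner of the inner cone does not introduce spurious singularities in the velocity field.
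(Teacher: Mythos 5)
There is a genuine gap, and it is about where you truncate. You modify $\Psi$ only in the inner cone $\{|z|\le r/2,\ |(r,z)|<\epsilon\}$, but the drift $u$ of \Cref{velocity} is singular at the origin on a strictly larger cone: in the interpolation zone $\{\frac r2\le |z|\le \frac{3r}4\}$ the stream function still contains the term $\big[(r+z)^{n-2-\alpha}-(r-z)^{n-2-\alpha}\big]\varrho\big(\frac{3r-4|z|}{r}\big)$, and near the ray $|z|=\frac r2$ (where $\varrho\big(\frac{3r-4|z|}{r}\big)$ is close to $1$ and $r\pm z\sim r$) formula \eqref{stream} gives $|u|\sim r^{-1-\alpha}$. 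Since your $\Psi_\epsilon$ equals $\Psi$ there, your $u_\epsilon$ is unbounded in every neighborhood of the origin, hence neither in $C^\infty(\R^n)$ nor bounded; this defeats the purpose of the corollary, which is to produce smooth bounded drifts for which the classical solutions $v_\epsilon$ of \Cref{main1} exist. The truncation must fill out essentially the whole admissible set in \eqref{velocity_trun_eq0}, matching $\Psi=\pm\frac{r^{n-1}}{2(n-2-\alpha)}$ near the lateral boundary $|z|=r$ (where the associated velocity is bounded) and interpolating oddly in $z$ at scale $\epsilon$ — this is exactly what the paper's short proof indicates with ``making $\Psi_\epsilon$ equal to $\pm$\,const$\cdot r^{n-1}$'' near the axis region.

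There is a second, related inconsistency in your template. To glue $C^\infty$ across the conic interface $\{|z|=r/2,\ |(r,z)|\le\epsilon\}$ against the unmodified exterior, $\tilde\Psi$ must agree with $\Psi_h$ to infinite order along that ray all the way down to the apex; but then $\tilde u$, and hence $u_\epsilon(x)=\epsilon^{-(1+\alpha)}\tilde u(x/\epsilon)$, carries the full $|x|^{-(1+\alpha)}$ singularity at the origin, so nothing has been truncated. Your concrete choice $\tilde\Psi=\eta(\rho^2+\zeta^2)\Psi_h$ with $\eta$ degenerate at $0$ removes that singularity but then violates property (i) near the apex, so $\Psi_\epsilon$ is not even continuous across the interface close to the origin; either way the construction fails, and condition (ii) as stated (``smooth away from the origin'') is in any case too weak to conclude $u_\epsilon\in C^\infty(\R^n)$. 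The parts of your argument that do work — oddness, the pointwise bound by rescaling, and the change-of-variables count $n-(1+\alpha)(n-\lambda)>0$ equivalent to $\alpha<\frac{\lambda}{n-\lambda}$ — would carry over, but once the modification region is enlarged as it must be, $\Psi$ mixes the homogeneities $n-1$ and $n-2-\alpha$, so a single fixed rescaled template cannot match exactly on the spherical interface; one should instead smooth $\Psi$ in an $\epsilon$-dependent way (as the paper does) and verify $|u_\epsilon|\le C\epsilon^{-1-\alpha}\le C|x|^{-1-\alpha}$ on the modified set directly. One caveat on the statement itself: with \eqref{velocity_trun_eq0} as printed, no choice of $\Psi_\epsilon$ makes $u_\epsilon$ continuous at the origin, because outside the cone $u\equiv\pm\frac{n-1}{2(n-2-\alpha)}\mathbf{e}_1$ on the two sides of the $x_1$-axis arbitrarily close to $0$; the corollary should be read as producing a drift that is bounded, smooth away from the origin (or genuinely smooth if one truncates in a full ball $\{|(r,z)|<\epsilon\}$), agrees with $u$ off an $\epsilon$-neighborhood of $0$, and is odd in $z$ — and your construction misses even this weaker goal because of the unboundedness identified above.
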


\begin{proof}
In the region $\{ |(r,z)|<\epsilon, \, -r \leq z \leq r \}$, we smoothly extend $\Psi_\epsilon$ while preserving the symmetry. Because $u$ in \eqref{velocity_eq0} has only one singularity at the origin $(r,z)=(0,0)$, one just needs to truncate this singularity while making $\Psi_\epsilon$ equal to $\pm r^{n-2}$ in $r$ close to $\{r=0\}$.
\end{proof}
% !TEX root = toy.tex

\section{The evolution in parabolic case}  \label{chap_para}

In this section, we prove the loss of continuity in the parabolic case. Instead of considering the drift-diffusion equation \eqref{toy1} in $\R^n \times [0,T]$ for some $T>0$, we study its evolution in $D_n \times [0,T]$ with zero boundary condition, where $D_n \subset \R^n$ is the bounded domain defined by
$$D_n:= \big\{(r,z,\theta,\varphi_1,\ldots,\varphi_{n-3}) \in \R^n \,\big|\, (r,z) \in [0,2] \times [-2,2]  \big\}.$$
This is sufficient because of the local nature of the properties [P2] and [P3]. Note that the drift $u$ and all other functions do not depend on the variables $\varphi_i$ for $1 \leq i \leq n-3$. Thus we omit these coordinates for the sake of simplicity from now on. The main result of this section is

\begin{theorem} \label{main1}
Given any $\lambda \in (0,n-2)$ and any $\alpha \in (0,\frac{\lambda}{n-\lambda})$, there exist initial data $v_0 \in C_c^\infty(D_n)$ with $\|v_0\|_{C^2(D_n)} \leq C$, independent of $\varphi_i, 1 \leq i \leq n-3$, and $\kappa>0$ which satisfy the following property. For any $\delta \in (0,\frac{1}{2})$ and any $\epsilon \in (0,\delta]$, for the drift $u_{\epsilon/2}$ given by \Cref{velocity_trun}, the unique classical solution $v_{\epsilon}$ of \eqref{toy1} in the space-time domain $D_n \times [0, \frac{1}{2+\alpha}]$ with initial data $v_0$ and zero boundary condition satisfies
\[ v_\epsilon \Big( \delta,0,0,\frac{1-\delta^{2+\alpha}}{2+\alpha} \Big) \geq \kappa, 
    \quad v_\epsilon \Big( \delta,0,\pi,\frac{1-\delta^{2+\alpha}}{2+\alpha} \Big) \leq -\kappa, \]
and
\begin{align*}
    v_\epsilon(r,z,\theta,t) &\geq 0 \quad \text{ for } |\theta| \leq \frac{\pi}{2},\\
    v_\epsilon(r,z,\theta,t) &\leq 0 \quad \text{ for } \frac{\pi}{2} \leq \theta \leq \frac{3\pi}{2}.
\end{align*}
Here, $C$ is absolute. $\kappa$ is independent of $\epsilon$ and $\delta$, but depends on $\lambda$ and $v_0$.
\end{theorem}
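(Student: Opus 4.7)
The plan is to prove \Cref{main1} in two independent stages: a reflection-symmetry argument combined with the parabolic maximum principle pins the signs of $v_\epsilon$ on each of the hemispheres $|\theta|\leq\pi/2$ and $\pi/2\leq\theta\leq 3\pi/2$, and a moving-bump sub-solution that rides the drift characteristic through the middle strip produces the pointwise lower bound $v_\epsilon(\delta,0,0,T_\delta)\geq\kappa$ at $T_\delta:=(1-\delta^{2+\alpha})/(2+\alpha)$; the companion bound $v_\epsilon(\delta,0,\pi,T_\delta)\leq-\kappa$ then follows by antisymmetry. For the sign step I would choose $v_0\in C_c^\infty(D_n)$, independent of the azimuthal coordinates $\varphi_i$, antisymmetric under $\theta\mapsto\pi-\theta$ (equivalently, odd in the Cartesian variable $x_2$), supported in a small fixed neighbourhood of the two points $(r,z,\theta)=(1,0,0)$ and $(1,0,\pi)$, with $v_0\geq 1$ on a smaller neighbourhood of $(1,0,0)$. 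The drift $u_{\epsilon/2}$ from \Cref{velocity_trun} depends only on $(r,z)$ and has vanishing $\mathbf{e}_\theta$-component, so both $\Delta$ and $u_{\epsilon/2}\cdot\nabla$ commute with this reflection; hence $v_\epsilon$ stays antisymmetric in $\theta$ throughout $[0,1/(2+\alpha)]$, vanishes on the hyperplane $\{x_2=0\}=\{\theta\in\{\pi/2,3\pi/2\}\}$, and together with the zero Dirichlet data on $\partial D_n$ the parabolic maximum principle applied separately on each hemisphere yields the two hemispheric sign conclusions.

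\textbf{Barrier along the characteristic.} On the ray $\{z=0,\theta=0\}$ inside the middle strip, \eqref{velocity_eq0} gives $u_r(r,0)=-r^{-1-\alpha}$ and $u_z(r,0)=0$, so the integral curve through $(1,0,0)$ is $r(t)=(1-(2+\alpha)t)^{1/(2+\alpha)}$, reaching $r=\delta$ precisely at $t=T_\delta$. I would exhibit a compactly supported smooth sub-solution of $\partial_t-\Delta+u_{\epsilon/2}\cdot\nabla$ of the form
\[
  \underline v(r,z,\theta,t):=A(t)\,\chi\!\left(\tfrac{r-r(t)}{\sigma(t)}\right)\chi\!\left(\tfrac{z}{\tau(t)}\right)\eta(\theta),
\]
with $\chi$ a fixed non-negative smooth bump on $[-1,1]$ satisfying $\chi(0)=1$, and $\eta$ a smooth even angular bump on $(-\pi/4,\pi/4)$ with $\eta(0)=1$. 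The widths $\sigma(t),\tau(t)$ are tuned so that (i) the support stays inside $\{|z|\leq r/2\}\cap\{r\geq\epsilon/2\}$ throughout $[0,T_\delta]$, which is possible because $\epsilon\leq\delta\leq r(t)$; (ii) $\underline v(\cdot,0)\leq v_0$; and (iii) the leading and the linear-in-offset parts of $u_{\epsilon/2}\cdot\nabla\underline v$ cancel against the moving-frame contributions in $\partial_t\underline v$ by matching $\dot\sigma/\sigma=\partial_r u_r(r(t),0)=(1+\alpha)/r(t)^{2+\alpha}$ and $\dot\tau/\tau=\partial_z u_z(r(t),0)=(n-3-\alpha)/r(t)^{2+\alpha}$. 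The remaining terms -- the quadratic Taylor remainder of the drift, the second-order part of $\Delta$, and the coordinate-singular $(n-3)\cot\theta/r^2\,\partial_\theta\underline v$ piece (which is uniformly bounded on the support because $\eta$ is smooth and even at $\theta=0$) -- are absorbed into $A'(t)/A(t)$ by Gronwall; the key estimate uses the self-similar substitution $s:=r^{2+\alpha}$, in which the characteristic has constant speed $\dot s=-(2+\alpha)$ and $s\in[\delta^{2+\alpha},1]$, so the cumulative error contribution becomes finite and $\delta$-independent. This produces $A(T_\delta)\geq\kappa(\lambda,v_0)>0$ uniformly in $\epsilon,\delta$, and the comparison principle gives $v_\epsilon(\delta,0,0,T_\delta)\geq\underline v(\delta,0,0,T_\delta)=A(T_\delta)\geq\kappa$.

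\textbf{Main obstacle.} The technical heart is the simultaneous control of three competing scales: the Lagrangian stretching of the bump in the $r$-direction by the factor $r(t)^{-(1+\alpha)}$ (large as $r(t)\to\delta$), the diffusive spreading $\sim\sqrt{t}$ that threatens to push the bump out of the middle strip, and the drift blow-up $r(t)^{-1-\alpha}$ near the origin that inflates both the quadratic Taylor remainder and the curvature terms of $\Delta$. Matching $\sigma(t),\tau(t)$ to the linearised Lagrangian flow eliminates the first-order advection, after which the quadratic drift remainder and the second-order diffusive terms must be brought into simultaneous integrability against $dt$; the self-similar substitution $s=r^{2+\alpha}$ is the crucial device that converts the degenerate advection $\dot r=-r^{-1-\alpha}$ into uniform transport on the compact interval $[\delta^{2+\alpha},1]$ and prevents the Gronwall exponent from diverging in $\delta$. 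An appropriate choice of the initial widths $\sigma(0),\tau(0)$ -- possible only because $\lambda>0$ renders the drift strictly supercritical -- then closes the estimate and produces the desired $\delta$-uniform constant $\kappa(\lambda,v_0)$.
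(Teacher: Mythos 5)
Your overall strategy is the same as the paper's (odd reflection in $\theta\mapsto\pi-\theta$ plus the maximum principle for the sign statements; a bump sub-solution whose peak rides the characteristic $r(t)=h(t)=(1-(2+\alpha)t)^{1/(2+\alpha)}$, with an exponential damping factor absorbing the diffusion, and $\delta$-uniformity of $\kappa$ coming from $\int_0^{1/(2+\alpha)}h(s)^{-2}\,ds<\infty$ since $\alpha>0$). The sign/antisymmetry part is fine. But the quantitative core has a genuine gap: your prescription $\dot\sigma/\sigma=\partial_r u_r(r(t),0)=(1+\alpha)\,h(t)^{-(2+\alpha)}$ integrates to $\sigma(t)=\sigma(0)\,h(t)^{-(1+\alpha)}$ (and $\tau(t)=\tau(0)h(t)^{-(n-3-\alpha)}$ for $n\geq4$), because the Lagrangian flow of $\dot r=-r^{-1-\alpha}$ is radially \emph{expansive}. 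Hence at $t=T_\delta$ the radial half-width is $\sigma(0)\delta^{-(1+\alpha)}\gg\delta=r(T_\delta)$: the support of your bump leaves the strip $\{|z|\leq r/2\}$, reaches the truncation zone $\{|(r,z)|<\epsilon/2\}$ and even $r\leq0$, so your condition (i) is incompatible with (iii), the explicit formula \eqref{velocity_eq0} no longer applies on the support, and the "quadratic Taylor remainder" is not small (the linearization requires $|r-r(t)|\ll r(t)$, while $\sigma(t)\gg r(t)$). The obvious repair, taking $\sigma(0)\sim\delta^{1+\alpha}$ so that containment holds, destroys the uniformity you need: the radial diffusion penalty $\int_0^{T_\delta}\sigma(t)^{-2}dt\sim\delta^{-2(1+\alpha)}$ enters the Gronwall exponent and makes $\kappa$ degenerate as $\delta\to0$. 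The substitution $s=r^{2+\alpha}$ does not address this; it only reparametrizes time along the characteristic.

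The paper avoids exactly this trap: its barrier $g(r,z,\theta,t)=\phi\big(\tfrac r{h(t)},\tfrac z{h(t)}\big)\eta\big(\tfrac{8\theta}{\pi}\big)$ is \emph{not} matched to the Lagrangian flow but shrinks self-similarly with $h(t)$, so its support stays in the middle strip and away from the truncation for $t\leq h^{-1}(\epsilon)$. The first-order advection is then not cancelled but controlled one-sidedly: Lemma \ref{subsolution} shows $\partial_t g+u\cdot\nabla g\leq0$ by reducing to the pointwise inequality \eqref{subsolution_eq4}, i.e.\ $f\geq0$ near $(1,0)$, verified through the Hessian computation $\partial_{rr}f(1,0)=4+2\alpha$, $\partial_{zz}f(1,0)=2(n-2-\alpha)$, which fixes $r_0$. (A sub-solution only needs the true drift to push mass toward smaller values of $g$ relative to the shrinking frame, so the expansiveness of the exact flow is harmless.) Diffusion is then absorbed by Lemma \ref{diffusion_control}, $-\Delta g\leq c_0h(t)^{-2}g$, which by the way requires a carefully chosen profile $\eta$ (exponential flatness at the edge so that $\eta''+\tfrac{n-1}{\rho}\eta'\geq c_1\eta$ there); a generic smooth bump $\chi$ need not satisfy such a ratio bound. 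If you replace your Lagrangian width-matching by the self-similar ansatz and supply the analogue of the convexity inequality \eqref{subsolution_eq4}, your argument becomes the paper's proof; as written, the construction cannot be closed.
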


\Cref{thm_parabolic} follows from \Cref{main1}, since we can use the solutions $v_\epsilon$ for the regularized drifts $u_{\epsilon/2}$ to approximate the solution $v$ we need in \Cref{thm_parabolic} with the following convergence argument.

\begin{proof}[Proof of \Cref{thm_parabolic}]
Let $T = \frac{1}{2+\alpha}$. $\{v_\epsilon\}_{\epsilon > 0}$ is uniformly bounded in $L^\infty_tL^2_x \bigcap L^2_tH^1_x$ by standard energy estimates using the fact that $u_{\epsilon/2}$ is divergence-free. Thus a subsequence $v_\epsilon \rightarrow v$ weakly in $L^2_tH^1_x$ and weakly-$*$ in $L^\infty_tL^2_x$, as $\epsilon \rightarrow 0$, and we may choose further subsequences later. Moreover, by the maximum principle, we also know $\{v_\epsilon\}_{\epsilon > 0}$ is uniformly bounded in $D_n \times [0,T]$.\par

Because of the smoothness of $u_{\epsilon/2}$ and the relation \eqref{velocity_trun_eq0}, $u_{\epsilon/2}$ is uniformly bounded in $L^n$ in any compact subset of $(D_n \backslash \{0\}) \times [0,T]$. Moreover, from \Cref{main1}, $v_\epsilon \geq 0$ in the domain $(D_n \bigcap \{|\theta|<\frac{\pi}{2}\}) \times [0,T]$. By \Cref{thm_aronson}, moreover $\{v_\epsilon\}_{\epsilon > 0}$ is uniformly H\"older continuous in any compact subset of $(D_n \bigcap \{|\theta|<\frac{\pi}{2}\}) \times [0,T]$. Indeed, we may use Harnack's inequality \eqref{harnack} with a constant $C$ that is uniform in $\epsilon$. This is because $\|u_{\epsilon/2}\|_{L^n(\Omega)} < C(\Omega)$ for any compact subset $\Omega$ of $D_n \backslash \{0\}$\footnote{Because $u$ defined by \eqref{stream_potential} merely belongs to $L^{n-}(D_n)$, $\{u_{\epsilon/2}\}_{\epsilon>0}$ cannot be bounded uniformly in $L^n(D_n)$. Therefore, the constant $C(\Omega)$ cannot be made uniform in different $\Omega$. This is the reason why we must get rid of the point $x=0$ and use a diagonal argument in this proof.}. Then we can prove uniform H\"older continuity. Note that proving H\"older continuity from Harnack's inequality is a local argument, where we do not need any information on the boundary close to $r=0$. In contrast, if we wanted to use Schauder estimate, we would need some control on the boundary close to $r=0$. \par

Clearly, $u_{\epsilon/2}$ converges to $u$ in $L^{n-\lambda}$ as $\epsilon \rightarrow 0$, where $u$ is given by \Cref{velocity}. With all the convergence information above, we can deduce that $v \in L^\infty_tL^2_x \bigcap L^2_tH^1_x$ solves \eqref{toy1} with drift $u$ in the sense of distributions, thus $v$ is a weak solution. \par

From \Cref{main1}, we know that for any $\delta \in (0,\frac{1}{2})$ and any $\epsilon \in (0,\delta]$,
\[ v_\epsilon\Big( \delta,0,0,\frac{1-\delta^{2+\alpha}}{2+\alpha} \Big) \geq \kappa, 
    \quad v_\epsilon\Big( \delta,0,\pi,\frac{1-\delta^{2+\alpha}}{2+\alpha} \Big) \leq -\kappa. \]
Fix any sequence $\{\delta_k\}_{k \in \N} \subset \R^+$ with $\lim_{k \rightarrow \infty} \delta_k = 0$. By the uniform H\"older continuity in any compact subset of $(D_n \bigcap \{|\theta|<\frac{\pi}{2}\}) \times [0,T]$ and a diagonal argument, we can extract a subsequence from $\{v_\epsilon\}_{\epsilon > 0}$ converging to $v$ such that
\[  v\Big( \delta_k,0,0,\frac{1-\delta_k^{2+\alpha}}{2+\alpha} \Big) \geq \kappa,
    \quad v\Big(\delta_k,0,\pi,\frac{1-\delta_k^{2+\alpha}}{2+\alpha} \Big) \leq -\kappa, \text{ for any }k, \]
holds in the limit when $\epsilon \rightarrow 0$. Here, the second inequality can be proved using exactly the same argument as the first inequality. Letting $k \rightarrow \infty$ shows that $v$ is not continuous at the point $(r,z,\theta,t) = (0, 0, 0, \frac{1}{2+\alpha})$. Note that the loss of continuity is a local behavior, so we can state this result for general space domains, for example, the unit ball in the statement of \Cref{thm_parabolic}.

\end{proof}

To prepare for the proof of \Cref{main1}, we need two lemmas.

\begin{lemma} \label{subsolution}
Given the drift $u \in L^{n-\lambda}(\R^{n})$ from \eqref{stream} and \eqref{stream_potential}, let $\eta \in C^\infty(\R)$ be an even function which is supported on $[-1,1]$. Assume furthermore $\eta(0)=1$ and that $\eta$ is strictly decreasing on $[0,1]$. Then there exists $r_0 \in (0,\frac{1}{2})$ with the following property. Define
\begin{align} \label{subsolution_eq0}
\begin{split}
    \phi(r,z) &:= \eta \Big( \frac{|(r-1,z)|}{r_0} \Big), \\
    g(r,z,\theta,\varphi_1,\ldots,\varphi_{n-3},t) &:= \phi \Big( \frac{r}{h(t)}, \frac{z}{h(t)} \Big) \eta\Big( \frac{8\theta}{\pi} \Big),
\end{split}
\end{align}
where $h(t) := \big(1-(2+\alpha)t \big)^{\frac{1}{2+\alpha}}$, then $g$ satisfies
\begin{equation}
    \partial_t g + u \cdot \nabla g \leq 0, \quad \text{in } D_n \times \Big[0,\frac{1}{2+\alpha}\Big].
\end{equation}
\end{lemma}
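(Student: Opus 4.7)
The plan is to reduce the parabolic inequality to a stationary, dimensionless one via the self-similar ansatz, and then verify it by a direct computation. From $h(t)^{2+\alpha} = 1-(2+\alpha)t$ one reads off $h'(t) = -h(t)^{-(1+\alpha)}$, so $-h'(t)/h(t) = h(t)^{-(2+\alpha)}$. Since $u$ has no $\theta$-component, $u \cdot \nabla g = u_r \partial_r g + u_z \partial_z g$, and the non-negative angular factor $\eta(8\theta/\pi)$ may be factored out once and for all. With the scaled variables $R := r/h(t)$, $Z := z/h(t)$, the chain rule gives
\[
  \partial_t g = h(t)^{-(2+\alpha)} \bigl( R \partial_R \phi + Z \partial_Z \phi \bigr) \eta(8\theta/\pi),
\]
\[
  u \cdot \nabla g = h(t)^{-1} \bigl( u_r \partial_R \phi + u_z \partial_Z \phi \bigr) \eta(8\theta/\pi),
\]
where $u_r, u_z$ are evaluated at $(h(t)R, h(t)Z)$.

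Next, I would impose $r_0 \leq 1/3$ so that the support of $\phi(\cdot/h, \cdot/h)$ remains inside the middle strip $\{-r/2 \leq z \leq r/2\}$ for every $t \in [0, 1/(2+\alpha)]$. There the explicit formula \eqref{velocity_eq0} applies and shows that $(u_r, u_z)$ is positively homogeneous of degree $-(1+\alpha)$, so $u_r(hR, hZ) = h^{-(1+\alpha)} u_r(R, Z)$ and similarly for $u_z$. The prefactor $h^{-(2+\alpha)}$ in $u \cdot \nabla g$ then matches that of $\partial_t g$. Writing $\nabla \phi = \tfrac{\eta'(\rho/r_0)}{r_0 \rho}(R-1, Z)$ with $\rho := |(R-1, Z)|$, the strict decrease of $\eta$ on $[0,1]$ gives $\eta'(\rho/r_0) \leq 0$, and the required inequality collapses to the time-independent scalar bound
\[
  Q(R, Z) := (R + u_r(R, Z))(R - 1) + (Z + u_z(R, Z)) Z \geq 0
  \quad \text{on } \overline{B_{r_0}(1, 0)}.
\]

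Finally, I would Taylor expand $Q$ at $(R, Z) = (1, 0)$. Direct computation from \eqref{velocity_eq0} gives $u_r(1, 0) = -1$, $u_z(1, 0) = 0$, $\partial_R u_r(1, 0) = 1 + \alpha$ and $\partial_Z u_z(1, 0) = n - 3 - \alpha$, while the mixed derivatives $\partial_Z u_r(1, 0)$ and $\partial_R u_z(1, 0)$ vanish because $u_r$ is even and $u_z$ is odd in $Z$. Setting $a = R - 1$, $b = Z$, one obtains
\[
  Q(1+a, b) = (2 + \alpha)\, a^2 + (n - 2 - \alpha)\, b^2 + O(\rho^3).
\]
Both coefficients are strictly positive since $\alpha < \lambda/(n-\lambda) < (n-2)/2$ (using $\lambda < n-2$), so for $r_0$ sufficiently small the cubic remainder is absorbed by the quadratic form and $Q \geq 0$ holds throughout $\overline{B_{r_0}(1,0)}$. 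The only delicate point is this joint choice of $r_0$, small enough both to keep $\supp \phi$ inside the middle strip where the explicit homogeneity of $u$ is available and to validate the Taylor estimate; every other step is a routine verification that the self-similar profile with $h$ tuned to the homogeneity of $u$ is a subsolution of the pure transport operator $\partial_t + u \cdot \nabla$.
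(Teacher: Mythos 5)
Your proposal is correct and follows essentially the same route as the paper: exploit the $(-1-\alpha)$-homogeneity of $(u_r,u_z)$ in the strip $\{-r/2\le z\le r/2\}$ to scale out $h(t)$, use $\eta'\le 0$ to reduce the subsolution inequality to the pointwise bound $\big[(r,z)+(u_r,u_z)\big]\cdot(r-1,z)\ge 0$ on $\overline{B_{r_0}(1,0)}$, and verify it by a second-order Taylor expansion at $(1,0)$ with vanishing mixed term and positive diagonal Hessian entries, choosing $r_0$ small. The only cosmetic difference is that the paper multiplies through by $r^{n-2}$ and checks the Hessian of the resulting function $f$ (getting $4+2\alpha$ and $2(n-2-\alpha)$), whereas you expand the quotient form directly via the derivatives of $u$ at $(1,0)$ — these are equivalent.
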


\begin{lemma} \label{diffusion_control}
There exist a function $\eta \in C^\infty(\R)$ satisfying all requirements in \Cref{subsolution} and a constant $c_0 > 0$ depending on $\eta$, such that the function $g$ defined in \eqref{subsolution_eq0} satisfies
\[ -\Delta g \leq c_0 [h(t)]^{-2} g. \]
\end{lemma}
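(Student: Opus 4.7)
The plan is to take $\eta$ to be (a normalization of) the standard smooth bump, namely $\eta(s)=e\cdot\exp\!\big(-1/(1-s^2)\big)$ for $|s|<1$ and $\eta(s)=0$ otherwise. This is $C^\infty(\R)$, even, supported in $[-1,1]$, satisfies $\eta(0)=1$, and is strictly decreasing on $[0,1]$ since $\eta'(s)=-2s\eta(s)/(1-s^2)^2<0$ for $s\in(0,1)$, so the hypotheses of \Cref{subsolution} hold. The reason this choice is forced is that we will need $\eta''/\eta\to+\infty$ faster than $|\eta'|/\eta$ as $|s|\to 1^-$, which this bump does: from $(\log\eta)'=-2s/(1-s^2)^2$ and $\eta''/\eta=((\log\eta)')^2+(\log\eta)''$, we get $\eta''/\eta\sim 4s^2(1-s^2)^{-4}$ and $|\eta'|/\eta\sim 2|s|(1-s^2)^{-2}$ as $|s|\to 1^-$.

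I would then compute $\Delta g$ directly from the Laplacian formula in the mixed coordinate system. Writing $R:=r/h(t)$, $Z:=z/h(t)$, $\rho:=8/\pi$ and using the product structure $g=\phi(R,Z)\eta(\rho\theta)$, the chain rule gives
\begin{equation*}
\Delta g \;=\; h^{-2}\Big[\phi_{RR}+\tfrac{n-2}{R}\phi_R+\phi_{ZZ}\Big](R,Z)\,\eta(\rho\theta)\;+\;\tilde\phi(r,z,t)\cdot\tfrac{1}{r^2}\Big[\tfrac{(n-3)\rho}{\tan\theta}\eta'(\rho\theta)+\rho^2\eta''(\rho\theta)\Big].
\end{equation*}
On the open set $\{g>0\}$, dividing by $g=\tilde\phi\,\eta(\rho\theta)$ yields $\Delta g/g=h^{-2}A(R,Z)+r^{-2}B(\theta)$, where with $s:=|(R-1,Z)|/r_0$,
\begin{align*}
A(R,Z) \;&=\; \frac{1}{r_0^2}\cdot\frac{\eta''(s)+\eta'(s)/s}{\eta(s)} \;+\; \frac{(n-2)(R-1)}{R\,r_0^2\,s}\cdot\frac{\eta'(s)}{\eta(s)},\\
B(\theta) \;&=\; \rho^2\,\frac{\eta''(\rho\theta)}{\eta(\rho\theta)} \;+\; \frac{(n-3)\rho}{\tan\theta}\cdot\frac{\eta'(\rho\theta)}{\eta(\rho\theta)}.
\end{align*}

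The crucial step is to show that $A$ and $B$ are each bounded below by constants depending only on $\eta$, $r_0$ and $n$. The apparent singularities at $s=0$ (in $\eta'(s)/s$) and at $\theta=0$ (in $\eta'(\rho\theta)/\tan\theta$) are removable because $\eta$ is even, hence $\eta'(0)=0$, and L'Hopital produces the finite limits $\eta''(0)$ and $\rho\eta''(0)$ respectively. Away from $s=1$ and $|\rho\theta|=1$, $\eta$ is bounded below positively and $\eta,\eta',\eta''$ are bounded, so $A$ and $B$ are continuous. As $s\to 1^-$ (respectively $|\rho\theta|\to 1^-$), the asymptotics above show $\eta''/\eta\to+\infty$ like $(1-s^2)^{-4}$, which dominates the $|\eta'|/\eta$ contributions in $A$ and $B$ (which grow only like $(1-s^2)^{-2}$); therefore $A,B\to+\infty$ near the boundary of support. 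Combined with continuity in the interior, this gives finite lower bounds $A\ge-c_A$ and $B\ge-c_B$.

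Finally, since $r_0<1/2$ from \Cref{subsolution}, the support of $g$ lies in $r\in[h/2,3h/2]$, so $r^{-2}\le 4h^{-2}$. Thus on $\{g>0\}$,
\begin{equation*}
\frac{\Delta g}{g}\;\ge\;-c_A\,h^{-2}\;-\;c_B\,r^{-2}\;\ge\;-(c_A+4c_B)\,h^{-2},
\end{equation*}
so $-\Delta g\le c_0 h^{-2}g$ with $c_0:=c_A+4c_B$. On $\{g=0\}$, the infinite-order vanishing of the standard bump at $\pm 1$ forces all partial derivatives of $g$ to vanish as well, hence $\Delta g=0$ and the inequality is trivial. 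The expected main obstacle is precisely the sign/magnitude analysis of $\eta''/\eta$ near $|s|=1$: one must confirm that the blow-up goes in the \emph{favorable} direction so that $A$ and $B$ remain bounded below even as $g\to 0$. Everything else reduces to routine computation and compactness.
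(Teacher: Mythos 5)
Your proposal is correct and takes essentially the same route as the paper: an explicit compactly supported bump whose ratio $\eta''/\eta$ blows up to $+\infty$ at the edge of the support, a term-by-term computation of $\Delta g$ in the mixed coordinates, lower bounds on the ratios $\eta''/\eta$ and $\eta'/\eta$ (with the removable singularities at $s=0$ and $\theta=0$ handled by evenness), and the observation that $r\geq h/2$ on the support converts the $r^{-2}$ factor into $h^{-2}$. The only difference is the explicit choice of $\eta$: the paper uses a piecewise definition ($1-s^2$ near $0$, $\exp(s/(s-1))$ near $1$, a smooth monotone interpolation in between) to make the ratio bounds explicit, while your standard bump $e\exp(-1/(1-s^2))$ works for the same reason.
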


Now we prove the first main result.

\begin{proof}[Proof of \Cref{main1}]
For the smooth drift $u_{\epsilon/2} \in C^\infty(\R^n)$ given in \Cref{velocity_trun}, standard theory yields existence and uniqueness of a classical solution $v_\epsilon$ of \eqref{toy1} in the domain $D_n \times [0,\frac{1}{2+\alpha}]$ with zero Dirichlet boundary condition and initial data
\[ v_{\epsilon}(r,z,\theta,0) := g(r,z,\theta,0) - g(r,z,\pi-\theta,0).\]
Super-solutions and sub-solutions satisfy the standard maximum principle. Since this classical solution $v_\epsilon$ is unique, it must preserve all space symmetry of the initial data at any time $t$. At time $t=0$, the initial data has the following symmetry,
\[ v_{\epsilon}(r,z,\theta,0) = -v_{\epsilon}(r,z,\pi-\theta,0), \]
then at all time $t$, we know
\[ v_{\epsilon}(r,z,\theta,t) = -v_{\epsilon}(r,z,\pi-\theta,t). \]
and thus we have
\begin{align} \label{main1_eq1}  v_{\epsilon}\Big(r,z,\frac{\pi}2,t\Big) = v_{\epsilon}\Big(r,z,-\frac{\pi}2,t\Big) = 0.  \end{align}
\par

Define
\[ \ubar{v}(r,z,\theta,t) = \exp\Big( -c_0 \int_{0}^{t} [h(s)]^{-2} ds \Big) \big(g(r,z,\theta,t) - g(r,z,\pi-\theta,t) \big), \]
where $h, g$ and $c_0$ are defined in \Cref{subsolution} and \Cref{diffusion_control}. Because $u = u_{\epsilon/2}$ in $\{|(r,z)|\geq \frac{\epsilon}{2}\}$ and $\ubar{v}(r,z,\theta,t)=0$ for any $\{|(r,z)|\leq \frac{\epsilon}{2}\}$ and any $t \leq h^{-1}(\epsilon)$, we have that, in the region $\{ |\theta| \leq \frac{\pi}{2}, \, t \leq h^{-1}(\epsilon) \}$,
\begin{align*}
    \partial_t \ubar{v} - \Delta \ubar{v} + (u_{\epsilon/2} \cdot \nabla) \ubar{v} &\leq \exp\Big( -c_0 \int_{0}^{t} [h(s)]^{-2} ds \Big) (\partial_t g + u \cdot \nabla g) \leq 0.
\end{align*}
Here, we use \Cref{subsolution}, \Cref{diffusion_control} and the fact that $g(r,z,\pi-\theta,t) = 0$ in the region $\{ |\theta| \leq \frac{\pi}{2}, \, t \leq h^{-1}(\epsilon) \}$. Therefore, $\ubar{v}$ is a sub-solution of \eqref{toy1} in $D':=D_n \bigcap \{ |\theta| \leq \frac{\pi}{2} \}$ until the time $t=h^{-1}(\epsilon)$.
Clearly, by \eqref{main1_eq1} and the definition of $g$, $\ubar{v}|_{\partial D'} = 0$ and $v_{\epsilon}|_{\partial D'} \geq 0$, and then $v_{\epsilon} - \ubar{v} \geq 0$ on $\partial D'$. Since $v_\epsilon - \ubar{v}$ is a super-solution of \eqref{toy1} in $D'$ until the time $t=h^{-1}(\epsilon)$, maximum principle yields that, for any $\delta \in (0,\frac{1}{2})$ and any $\epsilon \in (0,\delta]$, we have
\begin{equation} \label{main1_eq2}
    \begin{split}
        v_{\epsilon}(\delta,0,0,h^{-1}(\delta)) &\geq \ubar{v}(\delta,0,0,h^{-1}(\delta)) 
            = \exp\Big( -c_0 \int_{0}^{h^{-1}(\delta)} [h(s)]^{-2} ds \Big) \\
            &\geq \exp\Big( -c_0 \int_{0}^{\frac{1}{2+\alpha}} \big(1-(2+\alpha)s \big)^{\frac{-2}{2+\alpha}} ds \Big) := \kappa.
    \end{split}
\end{equation}
Here, we use the fact $\frac{-2}{2+\alpha} > -1$. The bound of $v_{\epsilon}(\delta,0,\pi,h^{-1}(\delta))$ can be proved by the same argument in the symmetric space-time domain. $\kappa>0$ is independent of $\epsilon$ and $\delta$.
\end{proof}

Next, we come back to \Cref{subsolution} and \Cref{diffusion_control}.

\begin{proof}[Proof of \Cref{subsolution}]
Note that $h$ is the solution of the solution of the following ODE
\begin{equation} \label{subsolution_eq2}
    h' = -\frac{1}{h^{1+\alpha}}, \quad h(0) = 1.
\end{equation}
Define a vector field $w = w_r \mathbf{e}_r + w_z \mathbf{e}_z$ with
\[ (w_r, w_z) = \Big( -\frac{r}{[h(t)]^{2+\alpha}}, -\frac{z}{[h(t)]^{2+\alpha}} \Big). \]
The function $g$ solves $\partial_t g + w \cdot \nabla g = 0$. This is immediate from
\begin{align*}
    \partial_t g &= \nabla \phi\Big( \frac{r}{h(t)}, \frac{z}{h(t)} \Big) 
        \cdot \Big( -\frac{rh'(t)}{[h(t)]^2}, -\frac{zh'(t)}{[h(t)]^2} \Big) \eta\Big( \frac{8\theta}{\pi} \Big), \\
    w \cdot \nabla g &= \frac{1}{h(t)} \nabla \phi\Big( \frac{r}{h(t)}, \frac{z}{h(t)} \Big) 
        \cdot \Big( -\frac{r}{[h(t)]^{2+\alpha}}, -\frac{z}{[h(t)]^{2+\alpha}} \Big) \eta\Big( \frac{8\theta}{\pi} \Big).
\end{align*}
To check the inequality $\partial_t g + u \cdot \nabla g \leq 0$. Equivalently, we need to verify
\[
    (w-u) \cdot \nabla g \geq 0.
\]
Note that $\eta \geq 0$. By the definitions of $w$ and $g$, it suffices to show
\begin{align}
    \Big( \frac{(r,z)}{[h(t)]^{2+\alpha}} + (u_r,u_z) \Big) \cdot (r-h(t),z) \geq 0
\end{align}
in the region $\{(r,z,t) \,|\, |(r-h(t),z)| \leq h(t)r_0\}$ where the support of $g$ lies. Here, we use $\eta' \leq 0$. \par

Now we can impose our first condition $r_0 < \frac{1}{4}$ to ensure that the space support of $g(\cdot,\cdot,\theta,t)$ stays in the region $\{(r,z) \,|\, -\frac{r}{2} \leq z \leq \frac{r}{2}\}$ for any $t \in [0,\frac{1}{2+\alpha}]$. As we computed in \eqref{velocity_eq0}, the velocity $(u_r,u_z)$ in this region is given by
\[ \Big(-\frac{(r+z)^{n-3-\alpha}}{2r^{n-2}} - \frac{(r-z)^{n-3-\alpha}}{2r^{n-2}},
                \frac{(r+z)^{n-3-\alpha}}{2r^{n-2}} - \frac{(r-z)^{n-3-\alpha}}{2r^{n-2}} \Big). \]
By change of variable, it suffices to prove
\begin{equation} \label{subsolution_eq4}
    \Big[(r,z) + \Big(-\frac{(r+z)^{n-3-\alpha}}{2r^{n-2}} - \frac{(r-z)^{n-3-\alpha}}{2r^{n-2}},
                \frac{(r+z)^{n-3-\alpha}}{2r^{n-2}} - \frac{(r-z)^{n-3-\alpha}}{2r^{n-2}} \Big) \Big] 
                \cdot (r-1,z) \geq 0
\end{equation}
in the region $\{(r,z) \,|\, |(r-1,z)| \leq r_0\}$. This is equivalent to proving that
\[ f(r,z) := -\frac{r-z-1}{2(r+z)^{\alpha+3-n}} - \frac{r+z-1}{2(r-z)^{\alpha+3-n}} + r^{n-1}(r-1) + z^2r^{n-2} \geq 0 \]
in the region $\{(r,z) \,|\, |(r-1,z)| \leq r_0\}$. We can check by elementary computation that $f$ achieves a local minimum at $(1,0)$ with
\begin{align*}
    \partial_{rz} f(1,0) &= 0, \\
    \partial_{rr} f(1,0) &= 4+2\alpha > 0, \\ \partial_{zz} f(1,0) &= 2(n-2-\alpha) > 0.
\end{align*}
Therefore, we can choose $r_0>0$ small enough to ensure \eqref{subsolution_eq4}.

\begin{comment}
\begin{proof}[Proof of Claim 1]
The claim follows from \eqref{subsolution_eq2} and the following computations,

\end{proof}
\end{comment}

\end{proof}

\begin{proof}[Proof of \Cref{diffusion_control}]
By scaling, it suffices to prove the following inequality 
\begin{equation} \label{diffusion_control_eq1}
    -\Delta \bar{g} \leq c_0 \bar{g}
\end{equation}
for the function
\[ \bar{g}(r,z,\theta) := \eta \Big( \frac{|(r-1,z)|}{r_0} \Big) \eta\Big( \frac{8\theta}{\pi} \Big). \]
\par

Now fix
\begin{align*}
    \eta(s) := \begin{cases}
        0, & |s| \geq 1 \\
        \exp\big(\frac{|s|}{|s|-1}\big), \quad &\frac{1}{2} \leq |s| < 1 \\
        \text{any smooth even monotone extension,} & \frac{1}{4} \leq |s| \leq \frac{1}{2} \\
        1-s^2, & |s| \leq \frac{1}{4}
    \end{cases},
\end{align*}
Let $\rho := \frac{|(r-1,z)|}{r_0}$, then we compute
\begin{align} \label{diffusion_control_eq2}
\begin{split}
\Delta \bar{g}
    =& \Big( \partial_{rr} + \frac{n-2}{r}\partial_r + \frac{1}{r^2}\partial_{\theta \theta} + \frac{n-3}{r^2\tan\theta} \partial_\theta + \partial_{zz} \Big) \bar{g} \\
    =& \frac{1}{r_0^2} \eta'' (\rho) \eta\Big( \frac{8\theta}{\pi} \Big) 
        + \frac{n-1}{r_0^2 \rho} \eta' (\rho) \eta\Big( \frac{8\theta}{\pi} \Big)
        - \frac{n-2}{r_0^2 r\rho} \eta' (\rho) \eta\Big( \frac{8\theta}{\pi} \Big) \\
        &+ \frac{64}{\pi^2r^2} \eta (\rho) \eta''\Big( \frac{8\theta}{\pi} \Big)
        + \frac{8(n-3)}{\pi r^2 \tan\theta} \eta(\rho) \eta'\Big( \frac{8\theta}{\pi} \Big) \\
    \geq& \frac{1}{r_0^2} \eta\Big( \frac{8\theta}{\pi} \Big) \Big( \eta''(\rho) + \frac{n-1}{\rho} \eta' (\rho) \Big)
        + \frac{64}{\pi^2r^2} \eta (\rho) \Big( \eta''\Big( \frac{8\theta}{\pi} \Big) 
        + \frac{\pi(n-3)}{8\tan\theta} \eta'\Big( \frac{8\theta}{\pi} \Big) \Big) .
\end{split}
\end{align}
Here we eliminate the third term in the second line of \eqref{diffusion_control_eq2}, because $-\eta',\eta \geq 0$ on $[0,\infty)$. \par

By the definition of $\eta$, we have that for $\frac{1}{2} \leq s \leq 1$, the following two terms are bounded from below for $c_1<0$ small enough,
\begin{align*}
    \Big( \eta''(s) + \frac{\pi(n-3)}{8\tan(\frac{\pi s}{8})} \eta'(s) \Big) \frac{1}{\eta(s)} 
        &= \frac{1}{(s-1)^4} + \frac{2}{(s-1)^3} 
        - \frac{\pi(n-3)}{8\tan(\frac{\pi s}{8})(s-1)^2} \geq c_1, \\
    \Big(\eta''(s) + \frac{n-1}{s} \eta'(s) \Big) \frac{1}{\eta(s)} &= \frac{1}{(s-1)^4} + \frac{2}{(s-1)^3} - \frac{n-1}{(s-1)^2s} \geq c_1.
\end{align*}
This is indeed true since the first terms on the right hand side dominate when $s$ is close to 1. \par

The same lower bound is also true when $|s| \leq \frac{1}{4}$, i.e.
\begin{align*}
    \Big( \eta''(s) + \frac{\pi(n-3)}{8\tan(\frac{\pi s}{8})} \eta'(s) \Big) \frac{1}{\eta(s)} &= \frac{-2}{1-s^2} \Big( 1 + \frac{2\pi (n-3)s}{8\tan(\frac{\pi s}{8})} \Big) \geq c_1, \\
    \Big(\eta''(s) + \frac{n-1}{s} \eta'(s) \Big) \frac{1}{\eta(s)} &= \frac{-2n}{1-s^2} \geq c_1.
\end{align*}

In the region $\frac{1}{4} \leq |s| \leq \frac{1}{2}$, the same lower bound is also true, because $\eta(s)$ is larger than a positive constant and $|\eta''(s)|, |\frac{1}{s} \eta'(s)|$ are bounded. \par

Combining all these estimates with \eqref{diffusion_control_eq2}, we have
\[
    \Delta \bar{g} \geq \frac{c_1}{r_0^2} \eta\Big( \frac{8\theta}{\pi} \Big) \eta(\rho)
            + \frac{64c_1}{\pi^2r^2} \eta\Big( \frac{8\theta}{\pi} \Big) \eta (\rho)
        = \Big( \frac{c_1}{r_0^2} + \frac{64c_1}{\pi^2r^2} \Big) \bar{g}.
\]
The proof is complete.
\end{proof}
% !TEX root = toy.tex

\section{A parabolic toy model for the Navier-Stokes equations}  \label{chap_NS}

In this section, we give the proof of \Cref{thm_NS}. Its difference from \Cref{thm_parabolic} is that we need a drift $u \in L^q_tH^1_x(B \times [0,T])$ for $1 \leq q < \frac{2(2+\alpha)}{2\alpha+1}$. To achieve this goal, we will do some space-time truncation to get a time-dependent drift $u$.

For \eqref{toy1} in dimension $n=3$, we have constructed the drift $\bar{u} := (\bar{u}_r, \bar{u}_z, 0) \in L^{3-\lambda}(\R^3)$ with
\[
 (\bar{u}_r, \bar{u}_z) = \begin{cases}
        (0,-\frac{1}{1-\alpha}), &0 \leq \frac{3r}{4} \leq -z \\
        \text{interpolation region}, &\frac{r}{2} \leq -z \leq \frac{3r}{4} \\
        \big(-\frac{(r+z)^{-\alpha}}{2r} - \frac{(r-z)^{-\alpha}}{2r},
                \frac{(r+z)^{-\alpha}}{2r} - \frac{(r-z)^{-\alpha}}{2r} \big), \quad &-\frac{r}{2} \leq z \leq \frac{r}{2} \\
        \text{interpolation region}, &\frac{r}{2} \leq z \leq \frac{3r}{4} \\
        (0,\frac{1}{1-\alpha}), &0 \leq \frac{3r}{4} \leq z
    \end{cases}
\]
from its generalized function $\bar{\Psi}=\Psi$, defined in \Cref{velocity} with $n=3$. To compensate the term $2r^{-1}\partial_r v$ in the toy model of the axis-symmetric Navier-Stokes equations \eqref{axissymmetricequation2}, define another generalized stream function $\Phi_0: D \rightarrow \R$ by
\begin{align*}
    \Phi_0(r,z) = \begin{cases}
        -r^2, & 0 \leq \frac{3r}{4} \leq -z \\
        -r^2 \varrho\big(\frac{-4z-2r}{r}\big) + 2z \varrho\big(\frac{3r+4z}{r}\big), & \frac{r}{2} \leq -z \leq \frac{3r}{4} \\
        2z, \quad &-\frac{r}{2} \leq z \leq \frac{r}{2} \\
        r^2 \varrho\big(\frac{4z-2r}{r}\big) + 2z \varrho\big(\frac{3r-4z}{r}\big), & \frac{r}{2} \leq z \leq \frac{3r}{4} \\
        r^2, & 0 \leq \frac{3r}{4} \leq z
    \end{cases}
\end{align*}
and extend it outside $D$ smoothly with exponential decay. \par

Now we do space-time truncation such that the drift belongs to the natural energy space of the Navier-Stokes equations. Define a cutoff function $$\bar{\varrho}(r,t) := \varrho\Big( \frac{8r}{h(t)}-1 \Big),$$ where $\varrho$ and $h$ are defined in \Cref{velocity} and \Cref{subsolution} respectively. Define
\begin{align} \label{ns_velocity}
    \Phi := (\Phi_{0} + \bar{\Psi}) \bar{\varrho}
\end{align}
and $\tilde{u}:=(\tilde{u}_{r},\tilde{u}_{z},0)$ via $(\tilde{u}_{r}, \tilde{u}_{z}) = \big( -\frac{\partial_z \Phi}{r^{n-2}}, \frac{\partial_r \Phi}{r^{n-2}} \big)$. Using the same argument in the proof of \Cref{velocity}, one can check $\tilde{u} \in L^{\infty}_tL_x^{3-\lambda}(\R^3)$. And direct computations verify $\tilde{u} \in L^q_tH^1_x(\R^3)$ with $1 \leq q < \frac{2(2+\alpha)}{2\alpha+1}$ for $\alpha>0$ small enough.
\par

As in \Cref{velocity_trun}, we do space truncation to get smooth drifts $\{\Phi_{0,\epsilon}\}_{\epsilon>0}$. Then 
\begin{align} \label{ns_velocity_trun}
    \Phi_{\epsilon} := (\Phi_{0,\epsilon} + \bar{\Psi}_\epsilon) \bar{\varrho}
\end{align}
gives a smooth time-dependent drift $\tilde{u}_{\epsilon}:=(\tilde{u}_{r,\epsilon},\tilde{u}_{z,\epsilon},0)$ via $(\tilde{u}_{r,\epsilon}, \tilde{u}_{z,\epsilon}) = \big( -\frac{\partial_z \Phi_{\epsilon}}{r^{n-2}}, \frac{\partial_r \Phi_{\epsilon}}{r^{n-2}} \big)$. Here, $\bar{\Psi}_\epsilon=\Psi_\epsilon$ is defined in \Cref{velocity_trun} with $n=3$. The velocity field $\tilde{u}_\epsilon$ satisfies
\begin{align} \label{ns_fact1}
    \tilde{u}_{\epsilon} = \bar{u} + \Big( -\frac{2}{r},0 \Big)
        \quad \text{for } -\frac{r}{2} \leq z \leq \frac{r}{2}, r \geq \frac{h(t)}{4}.
\end{align}
\par

In this setting, the following theorem is an analogue of \Cref{main1}.
\begin{theorem} \label{main1_ns}
Given any $\lambda \in (0,1)$ and any $\alpha \in (0,\frac{\lambda}{3-\lambda})$, there exist initial data $v_0 \in C_c^\infty(D_3)$ with $\|v_0\|_{C^2(D_3)} \leq C$, and $\kappa>0$ which satisfy the following property. For any $\delta \in (0,\frac{1}{2})$ and any $\epsilon \in (0,\delta]$, for the drift $\tilde{u}_{\epsilon/2}$ given by \eqref{ns_velocity_trun}, the unique classical solution $v_{\epsilon}$ of \eqref{axissymmetricequation2} in the space-time domain $D_3 \times [0, \frac{1}{2+\alpha}]$ with initial data $v_0$ and zero boundary condition satisfies
\[ v_\epsilon \Big( \delta,0,0,\frac{1-\delta^{2+\alpha}}{2+\alpha} \Big) \geq \kappa, 
    \quad v_\epsilon \Big( \delta,0,\pi,\frac{1-\delta^{2+\alpha}}{2+\alpha} \Big) \leq -\kappa, \]
and
\begin{align*}
    v_\epsilon(r,z,\theta,t) &\geq 0 \quad \text{ for } |\theta| \leq \frac{\pi}{2},\\
    v_\epsilon(r,z,\theta,t) &\leq 0 \quad \text{ for } \frac{\pi}{2} \leq \theta \leq \frac{3\pi}{2}.
\end{align*}
Here, $C$ is absolute. $\kappa$ is independent of $\epsilon$ and $\delta$, but depends on $\lambda$ and $v_0$.
\end{theorem}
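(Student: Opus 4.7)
The plan is to follow the blueprint of the proof of \Cref{main1} essentially verbatim, exploiting one algebraic observation that reduces the toy model \eqref{axissymmetricequation2} to the standard drift-diffusion equation \eqref{toy1} on the support of the barrier. Indeed, for any smooth function $w$,
$$(\tilde u_{\epsilon/2} \cdot \nabla) w + 2r^{-1}\partial_r w = \Big(\tilde u_{\epsilon/2} + \tfrac{2}{r}\mathbf{e}_r\Big) \cdot \nabla w,$$
and by the design relation \eqref{ns_fact1} the effective drift $\tilde u_{\epsilon/2} + (2/r)\mathbf{e}_r$ coincides with the drift $\bar u$ of \Cref{velocity} (taken with $n=3$) throughout the region $\{-r/2 \leq z \leq r/2,\, r \geq h(t)/4\}$.

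First I would invoke standard parabolic theory to obtain the unique classical solution $v_\epsilon$ of \eqref{axissymmetricequation2} on $D_3 \times [0,\tfrac{1}{2+\alpha}]$ with the smooth initial data $v_0(r,z,\theta) := g(r,z,\theta,0) - g(r,z,\pi-\theta,0)$ (the same initial data used in \Cref{main1}) and zero Dirichlet boundary condition. Uniqueness forces the antisymmetry $v_\epsilon(r,z,\theta,t) = -v_\epsilon(r,z,\pi-\theta,t)$ to persist in time, so $v_\epsilon(r,z,\pm\pi/2,t) = 0$ and $v_\epsilon \geq 0$ on $\partial(D_3 \cap \{|\theta| \leq \pi/2\})$. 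I would then use the identical barrier
$$\ubar v(r,z,\theta,t) := \exp\Big(-c_0 \int_0^t [h(s)]^{-2}\,ds\Big)\big(g(r,z,\theta,t) - g(r,z,\pi-\theta,t)\big),$$
with $g, h, c_0$ from \Cref{subsolution} and \Cref{diffusion_control}, without having to reprove either lemma.

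The key step is then the geometric verification that the space-time support of $g$ sits inside the good region up to time $t = h^{-1}(\epsilon)$: the support condition $|(r-h(t),z)| \leq h(t)r_0$ with $r_0 < 1/4$ gives $r \geq 3h(t)/4 \geq h(t)/4$, $|z| \leq h(t)r_0 < r/2$, and $r \geq 3\epsilon/4 > \epsilon/2$, simultaneously placing the support outside the $\epsilon/2$-truncation zone of $\tilde u_{\epsilon/2}$ and inside the zone where \eqref{ns_fact1} applies. Combining this with \Cref{subsolution} and \Cref{diffusion_control} yields $\partial_t \ubar v - \Delta \ubar v + \tilde u_{\epsilon/2} \cdot \nabla \ubar v + 2r^{-1}\partial_r \ubar v \leq 0$ in $D' := D_3 \cap \{|\theta| \leq \pi/2\}$ up to $t = h^{-1}(\epsilon)$, while $v_\epsilon \geq 0 = \ubar v$ on $\partial D'$. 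The maximum principle then gives $v_\epsilon \geq \ubar v$ on $D' \times [0, h^{-1}(\epsilon)]$, and evaluating at $(\delta,0,0,h^{-1}(\delta))$ reproduces the bound $\kappa$ of \eqref{main1_eq2} verbatim; the symmetric bound at $\theta = \pi$ follows from antisymmetry. The main obstacle is thus the purely geometric verification that the barrier's support lies entirely inside the zone where \eqref{ns_fact1} cancels the extra first-order term, which is precisely why the construction \eqref{ns_velocity_trun} uses the scale-adapted cutoff $\bar\varrho(r,t) = \varrho(8r/h(t) - 1)$ rather than a fixed spatial one.
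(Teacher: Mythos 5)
Your proposal is correct and follows essentially the same route as the paper: the paper's own (very terse) proof of \Cref{main1_ns} consists precisely of the two facts you exploit, namely that by \eqref{ns_fact1} the operator in \eqref{axissymmetricequation2} with drift $\tilde u$ coincides with that of \eqref{toy1} with drift $\bar u$ on the region $\{-\tfrac{r}{2}\leq z\leq \tfrac{r}{2},\, r\geq \tfrac{h(t)}{4}\}$, and that the barrier's support from the proof of \Cref{main1} lies in this region. Your explicit verification that $|(r-h(t),z)|\leq h(t)r_0$ with $r_0<\tfrac14$ forces $r\geq \tfrac{3h(t)}{4}$, $|z|<\tfrac r2$, and $|(r,z)|>\tfrac{\epsilon}{2}$ for $t\leq h^{-1}(\epsilon)$ is exactly the detail the paper leaves implicit.
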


The proofs of \Cref{thm_NS} and \Cref{main1_ns} are essentially the same argument in the previous section with the following two facts. The first fact is that, from \eqref{ns_fact1}, in the region $\big\{ (r,z,\theta,t) \in D_3 \times [0,\frac{1}{2+\alpha}] \, | \, -\frac{r}{2} \leq z \leq \frac{r}{2}, r \geq \frac{h(t)}{4} \big\}$, \eqref{toy1} with the drift $u$ given by \Cref{velocity} is identical to \eqref{axissymmetricequation2} with the drift $\tilde{u}$ given by \eqref{ns_velocity}. The second fact is that the effective dynamics we rely on to prove \Cref{main1} and \Cref{thm_parabolic} exactly lies in this region.
% !TEX root = toy.tex

\section{The elliptic case} \label{chap_elli}

We consider the elliptic equation \eqref{toy1_elliptic} in the unit ball $B_1(0,\R^n) \subset \R^n$. We use the stochastic formulation of \eqref{toy1_elliptic} to prove the loss of continuity. Similar idea has been used by Seregin, Silvestre, {\v{S}}ver{\'a}k and Zlato{\v{s}} \cite{seregin2012divergence} to prove the loss of continuity for a supercritical drift $u \in L^1$ in dimension $n=3$. For some technical convenience, we work with Cartesian coordinate system in this section. The main result of this section is as follows.
\par

\begin{theorem} \label{prop_elliptic_c}
There exist absolute constants $C, \alpha_0 > 0$ and another constant $\kappa>0$ with the following property. For any $\lambda \in (0,n-1)$, any $\alpha \in (0,\alpha_0)$, any $\delta \in (0,1)$ and any $\epsilon \in (0,\frac{\delta}{2})$, consider the elliptic equation \eqref{toy1_elliptic} with the drift $\,Cu_\epsilon$ in the unit ball with the following smooth boundary condition $\gamma: \partial B_1(0,\R^n) \rightarrow \R$,
\[
    \gamma(x) = \begin{cases}
    1, \quad x_2>\frac{1}{2} \\
    \text{smooth extension which is odd and monotone in } x_2, \quad x_2 \in (-\frac{1}{2}, \frac{1}{2}) \\
    -1, \quad x_2<-\frac{1}{2}
    \end{cases},
\]
then the unique classical solution $v_\epsilon$ of this elliptic equation satisfies
\[ v_\epsilon(0,\delta,0,\ldots,0) \geq \kappa \]
and
\[ v_\epsilon(x) \geq 0, \quad \text{for } x \text{ with } x_2 \geq 0. \]
Here, $u_\epsilon$ is given by \Cref{velocity_trun} and $\kappa>0$ is independent of $\epsilon$ and $\delta$.
\end{theorem}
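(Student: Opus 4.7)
The plan is to use the stochastic interpretation of the linear elliptic equation. Since $Cu_\epsilon\in C^\infty(\mathbb R^n)$ is smooth and bounded, the SDE $dX_t = -Cu_\epsilon(X_t)\,dt + \sqrt{2}\,dW_t$ has a unique strong solution, and It\^o's formula on the classical solution $v_\epsilon$ yields the Feynman--Kac representation $v_\epsilon(x) = \mathbb E^x[\gamma(X_\tau)]$, where $\tau$ is the first exit time from $B_1$. The goal reduces to showing that the diffusion starting at $(0,\delta,0,\ldots,0)$ exits through $\partial B_1\cap\{x_2>1/2\}$ (where $\gamma=1$) with probability bounded below uniformly in $\epsilon$ and $\delta$.

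First I would handle the two sign conditions via a reflection symmetry. The construction in \Cref{chap_drift} makes $u_\epsilon$ equivariant under $\sigma:(x_1,x_2,x_3,\ldots,x_n)\mapsto(x_1,-x_2,x_3,\ldots,x_n)$, i.e.\ $u_\epsilon\circ\sigma = \sigma u_\epsilon$, while $\gamma\circ\sigma=-\gamma$; uniqueness of the classical solution then forces $v_\epsilon\circ\sigma = -v_\epsilon$, so $v_\epsilon\equiv 0$ on $\{x_2=0\}\cap B_1$. The maximum principle applied on $\Omega^+:=B_1\cap\{x_2>0\}$, whose boundary data is non-negative, gives $v_\epsilon\ge 0$ on $\Omega^+$, and Feynman--Kac on $\Omega^+$ reduces the pointwise claim to
\[
  v_\epsilon(0,\delta,0,\ldots,0)\;\ge\;\mathbb P^{(0,\delta,0,\ldots)}\bigl[X_{\tau^+}\in\partial B_1\cap\{x_2>1/2\}\bigr],
\]
where $\tau^+$ is the first exit time of $\Omega^+$.

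The main estimate uses a radial/angular decomposition in the hyperplane $\mathbb R^{n-1}$ spanned by $(x_2,\ldots,x_n)$. Set $R_t:=|X_t^{(2:n)}|$, $Z_t:=X_t^{(1)}$, and $\hat X_t:=X_t^{(2:n)}/R_t$. The key structural observation is that the Cartesian drift on $X^{(2:n)}$ equals $(u_r(r,z)/r)\,X^{(2:n)}$, a purely radial vector field in $\mathbb R^{n-1}$; by the standard skew-product decomposition, $\hat X_t$ is therefore a Brownian motion on $S^{n-2}$ (starting at the north pole $\mathbf e_2$) observed at the random clock $\tau_{\mathrm{ang}}(t)=\int_0^t R_s^{-2}\,ds$, independent of $(R_t,Z_t)$ modulo this time change. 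Inside the middle cone $\{|z|\le r/2\}$ the pair $(R_t,Z_t)$ satisfies $dR_t = -Cu_r\,dt + \tfrac{n-2}{R_t}\,dt + \sqrt{2}\,d\widetilde W_t$ and $dZ_t = -Cu_z\,dt + \sqrt{2}\,dB_t$, with $-u_r\asymp r^{-1-\alpha}$ and a linear restoring force $-Cu_z\approx -Cz/r^{2+\alpha}$ on $Z$. The deterministic ODE $\dot r = Cr^{-1-\alpha}$ drives $r$ from $\delta$ to $1$ in time $t_\ast\le 1/(C(2+\alpha))$, and the Ornstein--Uhlenbeck-type restoring force keeps $|Z_t|/R_t\lesssim 1/\sqrt{C}$ uniformly in $t$. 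Taking $C$ a sufficiently large absolute constant, with probability $\ge 1/2$ the process remains in the middle cone until hitting $\partial B_1$ at a time $\tau_{B_1}$, with $R_{\tau_{B_1}}\ge 4/\sqrt{17}$, $|Z_{\tau_{B_1}}|\le R_{\tau_{B_1}}/4$, and $\tau_{\mathrm{ang}}(\tau_{B_1})\le (1-\delta^\alpha)/(\alpha C)\le 1/(\alpha C)$ (uniformly in $\delta$ for each fixed $\alpha>0$). The independent spherical Brownian motion $\hat X$ on this bounded horizon then satisfies
\[
  \mathbb P\bigl[\hat X_s\cdot\mathbf e_2>0\ \forall s\le\tau_{\mathrm{ang}}(\tau_{B_1})\ \text{and}\ \hat X_{\tau_{B_1}}\cdot\mathbf e_2>0.6\bigr]\;\ge\;c(\alpha,n)>0
\]
by standard hitting/location estimates. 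On this joint event $\tau^+=\tau_{B_1}$ and $X_{\tau^+}^{(2)}=R_{\tau^+}(\hat X_{\tau^+}\cdot\mathbf e_2)>(4/\sqrt{17})\cdot 0.6>1/2$, producing the required $\kappa := c(\alpha,n)/2$.

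The main technical obstacle is the rigorous skew-product decomposition for the truncated drift $u_\epsilon$; $\epsilon$-uniformity follows because on the radial good event the process stays in $\{|x|\ge\delta/2\}$, where $u_\epsilon\equiv u$ is smooth and genuinely radial in $\mathbb R^{n-1}$. The Ornstein--Uhlenbeck control on $Z$ requires a standard bootstrap/stopping argument to confirm that the process never escapes the middle cone on the good event, which is routine since the restoring rate $Cr^{-2-\alpha}$ dominates the noise intensity throughout the trajectory.
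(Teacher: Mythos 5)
Your overall strategy---Feynman--Kac representation, odd symmetry in $x_2$ to obtain $v_\epsilon\ge 0$ on $\{x_2\ge 0\}$, and showing that with probability bounded below uniformly in $\epsilon,\delta$ the diffusion rides the drift from radius $\delta$ out through $\partial B_1\cap\{x_2>1/2\}$---is the same as the paper's. The genuine gap is in your control of the transverse coordinate $Z_t=X_t^{(1)}$. In the middle region the construction gives $u_z=\frac{(r+z)^{n-3-\alpha}-(r-z)^{n-3-\alpha}}{2r^{n-2}}\approx(n-3-\alpha)\,z\,r^{-2-\alpha}$ for $|z|\ll r$, so the drift acting on $Z$ in the SDE is $-Cu_z\approx -C(n-3-\alpha)z/r^{2+\alpha}$. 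For $n=3$ this equals $+C\alpha z/r^{2+\alpha}$: a \emph{repelling} force, not the Ornstein--Uhlenbeck restoring force your argument leans on (``routine since the restoring rate $Cr^{-2-\alpha}$ dominates the noise''). So in the dimension most relevant to the paper the bootstrap fails as stated. Confinement to the cone does hold, but for a different reason: the ratio $|z|/r$ contracts because the outward radial drift dominates, equivalently $-Cu$ points into the cone along its lateral faces. Verifying exactly this is the content of Claim 1 in the proof of \Cref{travel}, i.e.\ inequality \eqref{travel_eq10}, which is precisely the delicate point at $n=3$ (there $\partial_\zeta\beta\le 0$, and one must check $\beta(\nu,\nu)\ge 0$) and is the reason for the restrictions $\alpha\le\alpha_0$ and the small aperture $\nu\le\nu_0$; note also that the paper's cones have aperture $\nu y_2^{\alpha/4}$, not the fixed aperture $1/2$ you use. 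Without an inequality of this type your main estimate has no proof.

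Two further points. First, the uniform-in-$\delta$ claim that ``with probability $\ge 1/2$ the process remains in the middle cone until hitting $\partial B_1$'' cannot be had in one shot: near the starting radius the cone has width $O(\delta)$, and the noise must be beaten scale by scale; the correct statement is a product over dyadic scales $2^k\delta$ of factors of the form $1-C_n\exp\big(-c\,(2^k\delta)^{-\alpha/2}\big)$, which is how the paper (chaining through the regions $\Sigma_{2^kp}$ and using $\Gamma_{2^kp}=S_{2^{k+1}p}$) produces a $\delta$-independent $\kappa$. Your single stopping-time bound hides this multiscale structure. Second, the skew-product assertion---that the angular part of $X^{(2:n)}$ is a time-changed spherical Brownian motion independent of $(R_t,Z_t)$ when the radial drift depends on both $r$ and $z$---is plausible but not an off-the-shelf theorem and would need a proof (you flag this yourself); the paper avoids it entirely by applying the purely deterministic \Cref{travel} pathwise, using only a sup-norm bound on the Brownian increment over each dyadic scale, which is both simpler and what makes the uniformity in $\epsilon$ and $\delta$ transparent.
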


To prove \Cref{thm_elliptic}, one just needs to follow the proof of \Cref{thm_parabolic} line by line, with the help of \Cref{prop_elliptic_c}. To prove \Cref{prop_elliptic_c}, we need the following technical lemma.

\begin{lemma} \label{travel}
There exist absolute constants $\nu_0, \alpha_0 \in (0,1)$ with the following property. For any $\mu > 0$, $l \in (0,1)$, $\nu \in (0,\nu_0]$, $C>0$ and $a>0$, define
\[ \Sigma_{\nu,\mu} := \Big\{ (x_1,x_2,\ldots,x_n) \in \R^n \, \Big| \, 
    x_2 \in \Big[ \frac{\mu}{2}, 2\mu \Big], \max\big\{ |x_1|, |x_3|, \ldots, |x_n| \big\} \leq \nu x_2 \Big\} \]
and
\[ 
    \begin{split}
        \Gamma_{\nu,\mu} &:= \big\{ (x_1,x_2,\ldots,x_n) \in \Sigma_{\nu,\mu} \, \big| \, 
        x_2 = 2\mu \big\}, \\
        S_{\nu,\mu,l} &:= \big\{ (x_1,x_2,\ldots,x_n) \in \R^n \, \big| \, 
        x_2 = \mu, \max\big\{ |x_1|, |x_3|, \ldots, |x_n| \big\} \leq l \nu \mu \big\}.
    \end{split}
\]
Suppose we have two continuous functions $\rho: [0,a] \rightarrow \Sigma_{\nu,\mu},\, b: [0,a] \rightarrow \R^n$ and for any $t \in [0,a]$,
\begin{align} 
    \rho(t) &= \rho(0) - C\int_{0}^{t} u ( \rho(s) ) ds + b(t) - b(0), \label{travel_eq2}\\
    \sup_t |b(t)-b(0)| &\leq \frac{(1-l)\nu\mu}{8}\label{travel_eq5}.
\end{align}
Moreover, suppose $\rho$ satisfies
\begin{align*}
    \rho(0) &\in S_{\nu,\mu,l},\\
    \rho(a) &\in \partial \Sigma_{\nu,\mu},
\end{align*}
then $\rho(a) \in \Gamma_{\nu,\mu}$. Here, $u$ is given by \Cref{velocity}.
\end{lemma}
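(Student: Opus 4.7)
The plan is to show that $\rho(t)$ stays strictly inside $\Sigma_{\nu,\mu}$ relative to the bottom face $\{x_2 = \mu/2\}$ and the lateral face $\{\max_{i \neq 2}|x_i| = \nu x_2\}$ throughout $[0,a]$, so that the boundary exit at $t=a$ must be through $\Gamma_{\nu,\mu}$. First I fix $\nu_0 \leq 1/2$, so that whenever $\rho(s) \in \Sigma_{\nu,\mu}$ one has $|x_1(s)|/r(s) \leq \nu x_2(s)/r(s) \leq \nu \leq 1/2$, placing $\rho(s)$ in the inner region of \Cref{velocity} where $(u_r, u_z)$ has the explicit form \eqref{velocity_eq0}. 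In particular $u_r < 0$ throughout, so $-u_2 = (-u_r)x_2/r \geq 0$, and integrating \eqref{travel_eq2} together with \eqref{travel_eq5} yields $x_2(t) \geq x_2(0) + (b_2(t)-b_2(0)) \geq \mu - (1-l)\nu\mu/8 > \mu/2$, ruling out the bottom face.

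Next, I control the lateral components using the crucial observation that $u_i = u_r x_i/r$ for every $i \geq 2$, which turns \eqref{travel_eq2} into the linear Volterra equation
\[ x_i(t) = x_i(0) + \int_0^t G(s)\, x_i(s)\, ds + (b_i(t) - b_i(0)), \qquad G(s) := -C u_r(\rho(s))/r(\rho(s)) > 0, \]
with kernel $G$ independent of $i$. By Duhamel's formula, which one verifies directly for merely continuous $b_i$, the unique solution is
\[ x_i(t) = E(t) x_i(0) + (b_i(t) - b_i(0)) + \int_0^t G(s) \frac{E(t)}{E(s)}(b_i(s) - b_i(0))\, ds, \quad E(t) := \exp\textstyle\int_0^t G. \]
Using the identity $\int_0^t G(s)E(t)/E(s)\, ds = E(t) - 1$ together with $B := (1-l)\nu\mu/8 \geq \sup_s |b_i(s) - b_i(0)|$ produces $|x_i(t)| \leq E(t)(|x_i(0)| + B)$ and $x_2(t) \geq E(t)(\mu - B)$. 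The amplification factor $E(t)$ cancels in the ratio, giving
\[ \frac{|x_i(t)|}{x_2(t)} \leq \frac{l\nu\mu + B}{\mu - B} = \frac{\nu(l + (1-l)/8)}{1 - (1-l)\nu/8} < \nu, \]
which simplifies to $\nu_0 < 7$ and so holds automatically for $i \geq 3$.

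Finally I handle $x_1 = z$ separately, since $u_1 = u_z$ is not of the form $u_r x_1/r$. Applying the mean value theorem to \eqref{velocity_eq0} one writes $-u_z = H(r,z)\, x_1$ with $H = -(n-3-\alpha)\xi^{n-4-\alpha}/r^{n-2}$ for some $\xi \in (r-|z|, r+|z|) \subset (r/2, 3r/2)$. When $n \geq 4$ one has $H < 0$, and the analogue of the above Duhamel argument (with a negative kernel) bounds $|x_1(t)| \leq |x_1(0)| + 2B$, which divided by $x_2(t) \geq \mu - B$ yields $|x_1(t)|/x_2(t) < \nu$. When $n = 3$ one instead has $H = \alpha\xi^{-1-\alpha}/r > 0$, but a direct comparison of the explicit forms of $H$ and of the lower bound on $G$ in the inner region yields $CH \leq 6\alpha\, G$, hence $E_H(t) := \exp\int_0^t CH \leq E(t)^{6\alpha}$; choosing $\alpha_0 \leq 1/6$ then gives $E_H \leq E$, so the same ratio estimate closes. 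Combining all the above, $x_2(t) > \mu/2$ and $\max_{i \neq 2}|x_i(t)| < \nu x_2(t)$ strictly for every $t \in [0,a]$, so $\rho(a) \in \partial\Sigma_{\nu,\mu}$ forces $\rho(a) \in \Gamma_{\nu,\mu}$. The main obstacle is precisely the $n = 3$ case, where the drift on $x_1$ has the destabilizing sign and the argument only closes through the factor $6\alpha$ between competing growth rates, which is what forces the smallness of $\alpha_0$ in the statement.
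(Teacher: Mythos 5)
Your proof is correct, but it takes a genuinely different route from the paper's. The paper argues by contradiction at the boundary: it splits $\partial \Sigma_{\nu,\mu}$ into faces and proves (its Claim~1) that wherever $x_i\geq 0$ the drift satisfies $u\cdot n_{i+}\geq 0$ on the corresponding lateral direction, so the term $-C\int u\cdot n_{i+}\,ds$ in \eqref{travel_eq2} can only push the process away from that face; an exit through a lateral or bottom face would then require a normal displacement of order $(1-l)\nu\mu$ supplied entirely by $b$, contradicting \eqref{travel_eq5}. There, the smallness of $\alpha_0,\nu_0$ enters only in verifying the sign condition for $i=1$ when $n=3$. You instead never argue at the boundary: using that $u\cdot\mathbf{e}_j=u_r x_j/r$ for every $j\geq 2$, you represent each such component by a linear Volterra equation with the common kernel $G=-Cu_r/r>0$, so the exponential factor cancels in the ratio $|x_j(t)|/x_2(t)$, and you treat $x_1=z$ by factoring $-u_z=Hx_1$, which is stabilizing ($H<0$) for $n\geq4$ and destabilizing for $n=3$ but with rate $CH\leq 6\alpha G$ (this comparison does check out on $|z|\leq r/2$, since $CH/(\alpha G)\leq 2\cdot 3^{\alpha}$), closed by taking $\alpha_0\leq 1/6$. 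Both arguments confine the smallness of $\alpha,\nu$ to the $n=3$, $x_1$-direction and both need $\nu_0\leq\frac12$ so that $\Sigma_{\nu,\mu}$ lies in the middle region of \eqref{velocity_eq0}; your version yields the slightly stronger statement that the trajectory stays strictly away from the lateral and bottom faces for all $t$ and makes the competition of growth rates quantitative, while the paper's is more local, needing only the sign of the normal drift component near the face supposedly being exited. One small cleanup: for the Duhamel step on $x_1$ you should define $H(s):=-u_z(\rho(s))/x_1(s)$, which extends continuously across $x_1=0$ (where $u_z$ vanishes), rather than relying on a mean-value point $\xi(s)$ whose dependence on $s$ you do not address; the mean value theorem is then used only for the sign and size bounds on $H$.
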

\par

Now we can prove \Cref{prop_elliptic_c} from the viewpoint of Brownian motion with drifts. %This idea was used by Seregin, Silvestre, {\v{S}}ver{\'a}k and Zlato{\v{s}} in \cite{seregin2012divergence} to show the loss of continuity for a supercritical drift in $L^1$ in dimension $n=3$. \par

\begin{proof}[Proof of \Cref{prop_elliptic_c}]
Consider the elliptic equations with regularized drifts $u_\epsilon$
\begin{equation} \label{toy1_elliptic_reg}
  - \Delta v_\epsilon + C(u_\epsilon \cdot \nabla) v_\epsilon = 0
\end{equation}
with the boundary condition $v_\epsilon|_{\partial B_1(0,\R^n)} = \gamma$. Let $(\Theta, \mathcal{F}, \mathbb{P})$ be a probability space and $\{B_t\}_{t \geq 0}$ be a $n$-dimensional Brownian motion. For any $x \in B_1(0,\R^n)$, let $\{ X_t^{x,\epsilon} \}_{t \geq 0}$ be the stochastic process given by the stochastic differential equation
\begin{align} \label{sde_eps}
  d X_t^{x,\epsilon} &= - Cu_\epsilon (X_t^{x,\epsilon}) dt + dB_t, \\
  X_0^{x,\epsilon} &= x.
\end{align}
The solution of the stochastic differential equation is given by
\[ X_t^{x,\epsilon} = X_0^{x,\epsilon} - \int_{0}^{t} Cu_\epsilon (X_s^{x,\epsilon}) ds + B_t. \]
\par

Define two stopping time $\tau$ and $\vartheta$ by
\[
    \begin{split}
    \tau &:= \inf \{t \geq 0 \,|\, X_t^{x,\epsilon} \in \partial B_1(0,\R^n) \}, \\
    \vartheta &:= \inf \{t \geq 0 \,|\, X_t^{x,\epsilon} \in \partial B_1(0,\R^n), X_t^{x,\epsilon} \cdot \mathbf{e}_2 \leq 0 \}.
    \end{split}
\]
Note that $\mathbb{P}(\tau < \infty) = 1$. By Theorem 9.2.13 of \cite{oksendal2003stochastic}, we know the solution $v_\epsilon$ is given by
\begin{equation} \label{thm2_eq8}
    v_\epsilon(x) = \mathbb{E}[\gamma(X_{\tau}^{x,\epsilon})]
    = \int_{A} \gamma(X_{\tau}^{x,\epsilon}) d\mathbb{P} + \int_{A^c} \gamma(X_{\tau}^{x,\epsilon}) d\mathbb{P},
\end{equation}
where
\[
    \begin{split}
    A := \{ \omega \in \Theta \, | \, X_t^{x,\epsilon}(\omega) \cdot \mathbf{e}_2 > 0 \text{ for any } t \in [0,\tau(\omega)] \}.
    \end{split}
\]
If $x_2>0$, let $\tilde{x} := X_{\vartheta}^{x,\epsilon}$. For any $\omega \in A^c$, $\vartheta(\omega) < \infty$ and $\tilde{x}_2 = 0$. We can deduce
\begin{equation} \label{thm2_eq9}
    \begin{split}
    \int_{A^c} \gamma(X_{\tau}^{x,\epsilon}) d\mathbb{P} = \int_{A^c} \gamma(X_{\tau-\vartheta}^{\tilde{x},\epsilon}) d\mathbb{P} = 0,
    \end{split}
\end{equation}
because the symmetry of $u_\epsilon$ with respect to $x_2=0$ and the oddness of $\gamma$ in $x_2$. Here we also use that Brownian motion has independent increments.

Fix a small number $\nu \in (0,\nu_0)$, where $\nu_0$ is given in \Cref{travel}. For any $y=(y_1,y_2,\ldots,y_n) \in B_1(0,\R^n)$ with $y_2>0$, we introduce more convenient notations for the sets defined in \Cref{travel}
\[ 
    \begin{split}
    \Sigma_y &:= \Sigma_{\nu y_2^{\alpha/4},y_2}, \\
    \Gamma_y &:= \Gamma_{\nu y_2^{\alpha/4},y_2}, \\
    S_y      &:= S_{\nu y_2^{\alpha/4},y_2,2^{-\alpha/4}}.
    \end{split}
\]
Note that $\Gamma_y = S_{2y}$.
\par
\vspace{2mm}

\noindent \textbf{Claim 1:} There exist absolute constants $C,\, C_{1}>0$ with the following property. For any $y \in B_1(0,\R^n)$ with $y_2 \in [2\epsilon,1]$ and $y \in S_y$, the solution $X_t^{y,\epsilon}$ of \eqref{sde_eps} with drift $Cu_{\epsilon}$ satisfies
\begin{equation}
    \mathbb{P} (X_\sigma^{y,\epsilon} \in \Gamma_y \cup \partial B_1(0,\R^n)) \geq 1- \frac{4n}{\sqrt{2\pi}} \exp (-C_1y_2^{-\alpha/2}) \geq \frac{1}{2},
\end{equation}
where the stopping time $\sigma$ is defined by
\[ \sigma := \inf \{t \geq 0 \,|\, X_t^{y,\epsilon} \in \partial (B_1(0,\R^n) \cap \Sigma_y) \}. \] \par
\vspace{2mm}

Since $\tau$ and $\sigma$ are almost surely finite, we only need to look at those occurences for which $\tau$ and $\sigma$ are finite. With the help of Claim 1, we can estimate the value $v_\epsilon(p)$ for a fixed point $p \in B_1(0,\R^n)$ with $p_2 \geq 2\epsilon$ and $p_i=0$ for $i \neq 2$ from \eqref{thm2_eq8}. It suffices to estimate $\mathbb{E}[\gamma(X_{\tau}^{p,\epsilon})]$. Since the Brownian motion trajectory is almost surely continuous and $u_\epsilon$ is bounded, the function $t \rightarrow X_{t}^{p,\epsilon}$ is almost surely continuous. Define $F(\omega) := \{X_s^{p,\epsilon}(\omega), s \in [0,\tau(\omega)] \}$. Consider the sets $\Sigma_{2^k p}$ for $1 \leq k \leq k_0$, where $k_0 := \lfloor -\log_2(p_2) \rfloor$. Let
\[  D := \big\{ \omega \in \Theta \, \big| \, 
    \forall \, k \text{ with } S_{2^kp} \cap F(\omega) \neq \emptyset, t \rightarrow X_{t}^{p,\epsilon} \text{ exits } \Sigma_{2^k p} \text{ from } \Gamma_{2^k p} \big\}. \]
By Claim 1, we have
\[
    \begin{split}
    \mathbb{P} (D) &\geq \prod_{k=1}^{k_0} \Big( 1- \frac{4n}{\sqrt{2\pi}} \exp \big( -C_1 (p_22^k)^{-\alpha/2} \big) \Big) \\
    &\geq \prod_{k=1-k_0}^{0} \Big( 1- \frac{4n}{\sqrt{2\pi}} \exp \big( -C_1 2^{-k\alpha/2} \big) \Big) \\
    &\geq \prod_{k=0}^{\infty} \Big( 1- \frac{4n}{\sqrt{2\pi}} \exp \big( -C_1 2^{k\alpha/2} \big) \Big) \\
    &\geq \kappa > 0.
    \end{split}
\]
Here, $\kappa > 0$ is independent of $\epsilon>0$ and $p_2 \in [2\epsilon, 1]$.
\par

Since $\Gamma_{2^kp} = S_{2^{k+1}p}$ and $p \in S_p$, for any $\omega \in D$, $X_t^{p,\epsilon}(\omega)$ is a trajectory which exits $B_1(0,\R^n)$ from the part $\{ x \in \partial B_1(0,\R^n) \,|\, x_2 > \frac{1}{2} \}$, i.e. for all $\omega \in D$, we have $X_{\tau(\omega)}^{p,\epsilon}(\omega) \cdot \mathbf{e}_2 > \frac{1}{2}$. From \eqref{thm2_eq8} and \eqref{thm2_eq9},
\[
    \begin{split}
    v_\epsilon(p) = \mathbb{E}[\gamma(X_{\tau}^{p,\epsilon})]
    = \int_{A} \gamma(X_{\tau}^{p,\epsilon}) d\mathbb{P} \geq \int_{D} \gamma(X_{\tau}^{p,\epsilon}) d\mathbb{P} \geq \kappa.
    \end{split}
\]
Here, we have the following picture: For those occurences $\omega \in D$, $\gamma(X_{\tau(\omega)}^{p,\epsilon}(\omega)) = 1$. For $\omega \in A$, $\gamma(X_{\tau(\omega)}^{p,\epsilon}(\omega)) \geq 0$. Both follow from the definition of $\gamma$. Finally, we conclude that $v_\epsilon(p) \geq \kappa$ for any $p \in B_1(0,\R^n)$ with $p_2 \in [2\epsilon, 1]$ and $p_i = 0, i \neq 2$. \par

The fact that $v_\epsilon(x) \geq 0$ for $x \in B_1(0,\R^n)$ with $x_2 \geq 0$ follows from maximum principle and symmetry of $\gamma$.

\begin{proof}[Proof of Claim 1]
For $1$-dimensional Brownian motion $B^1_t$, by reflection principle, we have for any $a>0$ and any $t>0$
\[ \mathbb{P}\Big( \sup_{0 \leq s \leq t} B^1_s \geq a \Big)
    = 2 \mathbb{P}\Big( B^1_t \geq a \Big), \]
then
\[ 
    \begin{split}
    \mathbb{P}\Big( \sup_{0 \leq s \leq t} |B^1_s| \geq a \Big)
    \leq& \mathbb{P}\Big( \sup_{0 \leq s \leq t} B^1_s \geq a \Big)
        + \mathbb{P}\Big( \inf_{0 \leq s \leq t} B^1_s \leq -a \Big) \\
    =& 4\mathbb{P}\Big( B^1_t \geq a \Big) 
    = \frac{4}{\sqrt{2\pi t}} \int_a^{\infty} \exp \Big( -\frac{s^2}{2t} \Big) ds.
    \end{split}
\]
\par

For $n$-dimensional Brownian motion $B_t$ and $(nt)^{-\frac{1}{2}}a \geq 1$,
\[ 
    \begin{split}
    \mathbb{P}\Big( \sup_{0 \leq s \leq t} |B_s| \geq a \Big)
    \leq& \sum_{i=1}^{n} \mathbb{P}\Big( \sup_{0 \leq s \leq t} |B^i_s| \geq \frac{a}{\sqrt{n}} \Big) \\
    \leq& \frac{4n}{\sqrt{2\pi t}} \int_{n^{-1/2}a}^{\infty} \exp \Big( -\frac{s^2}{2t} \Big) ds \\
    =& \frac{4n}{\sqrt{2\pi}} \int_{(nt)^{-1/2}a}^{\infty} \exp \Big( -\frac{\zeta^2}{2} \Big) d\zeta \\
    \leq& \frac{4n}{\sqrt{2\pi}} \exp \Big( -\frac{a^2}{2nt} \Big).
    \end{split}
\]
Hence for any $t>0$ and any $a>0$,
\[
    \mathbb{P}\Big( \sup_{0 \leq s \leq t} |B_s| \leq a \Big) \geq 1 - \frac{4n}{\sqrt{2\pi}} \exp \Big( -\frac{a^2}{2nt} \Big).
\]
\par

Consider the case $\Sigma_y \subset B_1(0,\R^n)$. For those trajectories $B_t(\omega)$ with 
\begin{equation} \label{thm2_eq10}
    \sup_{0 \leq s \leq y_2^{2+\alpha}} |B_s(\omega)| \leq \frac{(1-2^{-\alpha/4})\nu y_2^{\alpha/4+1}}{8},
\end{equation}
we can choose $C>0$ big enough such that $t \rightarrow X_{t}^{y,\epsilon}(\omega)$ must exit $\Sigma_y$ at some time $\sigma(\omega) \in (0,C_0^{-1}y_2^{2+\alpha})$ with $C_0>0$ to be determined, because $u_\epsilon \cdot \mathbf{e}_2 \sim y_2^{-(1+\alpha)}$ in $\Sigma_y$, from the definition of the velocity field $Cu_\epsilon$. Since $y \in S_y$ and $u=u_\epsilon$ in $\Sigma_y$, we can apply \Cref{travel} and we can deduce $X_{\sigma(\omega)}^{y,\epsilon}(\omega) \in \Gamma_y$. This leads to
\[ \mathbb{P}\big( X_\sigma^{y,\epsilon} \in \Gamma_y \big)
    \geq 1- \frac{4n}{\sqrt{2\pi}} \exp \big( -C_1 y_2^{-\alpha/2} \big), \quad C_1 := \frac{C_0(1-2^{-\alpha/4})^2 \nu^2}{32n}. \]
We can choose $C_0>0$ such that $\mathbb{P}\big( X_\sigma^{y,\epsilon} \in \Gamma_y \big) \geq \frac{1}{2}$ for any $y_2 \in [2\epsilon,1]$.
\par

For the case where $\Sigma_y$ is not a subset of $B_1(0,\R^n)$, the trajectory $t \rightarrow X_{t}^{y,\epsilon}(\omega)$ with \eqref{thm2_eq10} must exit $\Sigma_y \cap B_1(0,\R^n)$ no later than the time when it exits $\Sigma_y$. The probability of exiting $\Sigma_y \cap B_1(0,\R^n)$ through $\partial(\Sigma_y \cap B_1(0,\R^n)) \backslash (\Gamma_y \cup \partial B_1(0,\R^n))$ is smaller than $\frac{4n}{\sqrt{2\pi}} \exp \big( -C_1 y_2^{-\alpha/2} \big)$. This concludes the proof of Claim 1.
\end{proof}
\end{proof}
\par

Finally, we prove \Cref{travel}.

\begin{proof}[Proof of \Cref{travel}]
For any $i \in \{1,3,4,\ldots,n\}$, define
\[
    \begin{split}
    \Gamma^{i+}_{\nu,\mu} &:= \big\{ (x_1,x_2,\ldots,x_n) \in \Sigma_{\nu,\mu} \, \big| \, 
        x_i = \nu x_2 \big\},  \\
    \Gamma^{i-}_{\nu,\mu} &:= \big\{ (x_1,x_2,\ldots,x_n) \in \Sigma_{\nu,\mu} \, \big| \, 
        x_i = -\nu x_2 \big\},  \\
    \Gamma'_{\nu,\mu} &:= \Big\{ (x_1,x_2,\ldots,x_n) \in \Sigma_{\nu,\mu} \, \Big| \, 
        x_2 = \frac{\mu}{2} \Big\},
    \end{split}
\]
then 
\[ \partial \Sigma_{\nu,\mu} = \Gamma_{\nu,\mu} \bigcup \Gamma'_{\nu,\mu} \bigcup \Big(\bigcup_{i}\Gamma^{i+}_{\nu,\mu}\Big) \bigcup \Big(\bigcup_{i}\Gamma^{i-}_{\nu,\mu}\Big). \]
\par

We prove $\rho(a) \in \Gamma_{\nu,\mu}$ by contradiction. Suppose $\rho(a) \in \Gamma^{i+}_{\nu,\mu}$ for some $i \in \{1,3,4,\ldots,n\}$ and we make the following claim. \par
\vspace{2mm}
\noindent \textbf{Claim 1:} For any $x \in \Sigma_{\nu,\mu}$ with $x_i \geq 0$, we have
\begin{equation} \label{travel_eq6} u(x) \cdot n_{i+} \geq 0, \end{equation}
where $n_{i+}$ is the outer normal vector of $\Sigma_{\nu,\mu}$ on the boundary component $\Gamma^{i+}_{\nu,\mu}$. \par
\vspace{2mm}

Because $\rho(a) \in \Gamma^{i+}_{\nu,\mu}$ and $\rho(0) \in S_{\nu,\mu,l}$, there exists $t_0 \in (0,a)$, such that 
\begin{equation} \label{travel_eq7}
    \begin{split}
        n_{i+} \cdot (\rho(a)-\rho(t_0)) &\geq \frac{(1-l)\nu\mu}{3}, \\
        \rho(t) \cdot \mathbf{e}_i &\geq 0 \quad \text{for any } t \in [t_0,a].
    \end{split}
\end{equation}
We multiply \eqref{travel_eq2} with $n_{i+}$
\[ n_{i+} \cdot (\rho(a)-\rho(t_0)) = -C\int_{t_0}^{a} n_{i+} \cdot u ( \rho(s) ) ds + n_{i+} \cdot (b(a)-b(t_0)), \]
then by \eqref{travel_eq7}, we can deduce
\[ \frac{(1-l)\nu\mu}{3} + C\int_{t_0}^{a} n_{i+} \cdot u ( \rho(s) ) ds \leq \frac{(1-l)\nu\mu}{4}, \]
which contradicts \eqref{travel_eq6} and $\rho(t) \cdot \mathbf{e}_i \geq 0$ for any $t \in [t_0,a]$.
\par

The possibility $\rho(a) \in \Gamma^{i-}_{\nu,\mu}$ or $\rho(a) \in \Gamma'_{\nu,\mu}$ can be excluded similarly.
\par

Now it suffices to prove Claim 1.
\begin{proof}[Proof of Claim 1]
The outer normal vector on the boundary component $\Gamma^{i+}_{\nu,\mu}$ is given by
\[ n_{i+}=\frac{1}{\sqrt{1+\nu^2}} \mathbf{e}_i + \frac{-\nu}{\sqrt{1+\nu^2}}\mathbf{e}_2 \]
and the velocity components $u(x) \cdot \mathbf{e}_1$ and $u(x) \cdot \mathbf{e}_j, j \geq 2$ are given by
\[
    \begin{split}
    u(x) \cdot \mathbf{e}_1 &= \frac{(r+z)^{n-3-\alpha}}{2r^{n-2}} - \frac{(r-z)^{n-3-\alpha}}{2r^{n-2}}, \\
    u(x) \cdot \mathbf{e}_j &= \Big( -\frac{(r+z)^{n-3-\alpha}}{2r^{n-2}} - \frac{(r-z)^{n-3-\alpha}}{2r^{n-2}} \Big) \cdot \frac{x_j}{r}, \quad 2 \leq j \leq n.
    \end{split}
\]
\par

If $i = 1$, for any $x \in \Sigma_{\nu,\mu}$ with $x_1>0$, we know $\cos \theta \in \big[(1+\nu^2(n-2))^{-\frac{1}{2}}, 1\big]$. Let $\zeta = \frac{z}{r} \in [0,\nu]$, then \eqref{travel_eq6} is equivalent to
\begin{equation} \label{travel_eq10}
    (1+\nu \cos \theta) (1+\zeta)^{n-3-\alpha} - (1-\nu \cos \theta) (1-\zeta)^{n-3-\alpha} \geq 0, \text{ for } \zeta \in [0,\nu].
\end{equation}
By elementary analysis, there exist $\alpha_0,\nu_0 > 0$ such that \eqref{travel_eq10} and hence \eqref{travel_eq6} hold for any $\nu \in (0,\nu_0]$ and $\alpha \in (0,\alpha_0]$. Indeed, we just need to look at the following function $\beta$ in $\zeta,\nu$ with parameter $\xi\in \big[(1+\nu^2(n-2))^{-\frac{1}{2}}, 1\big]$,
\[
    \beta(\zeta,\nu) := (1+\xi\nu) (1+\zeta)^{n-3-\alpha} - (1-\xi\nu) (1-\zeta)^{n-3-\alpha}.
\]
For $n \geq 4$, we have $\partial_\zeta \beta \geq 0$ and $\beta(0,\nu)\geq0$, so \eqref{travel_eq10} holds. For $n=3$, $\partial_\zeta \beta \leq 0$, it suffices to prove $\beta(\nu,\nu) \geq 0$. Now we look at the function $(1+\xi\nu) (1+\nu)^{-\alpha} - (1-\xi\nu) (1-\nu)^{-\alpha}$, its derivative in $\nu$ is positive for small $\nu>0$, so $\beta(\nu,\nu) \geq 0$ and thus \eqref{travel_eq10} holds.
\par

If $i \geq 3$, this is straightforward since \eqref{travel_eq6} is equivalent to $x_i \leq \nu x_2$.

\end{proof}
\end{proof}

\renewcommand{\bibname}{References}
\bibliography{bibliography}
\bibliographystyle{abbrv}

\end{document}